\DeclareMathAlphabet{\mathpzc}{OT1}{pzc}{m}{it}
\def\cal{\mathcal}
\def\F2{\mathbb F_2}
\def\A2{{\mathcal A}_2}
\def\Z{{\mathbb Z}}
\def\N{{\mathbb N}}
\def\BC{{\rm BiCompl}}
\def\As{{\mathcal A}s}
\def\dAs{d\As}
\def\Mor{{\rm Mor}}
\def\Hom{{\rm Hom}}
\def\End{{\rm End}}
\def\Mor{{\rm Mor}}
\def\kfield{\mathbf k}
\newcommand{\ac}{\scriptstyle \text{\rm !`}}
\def\dgkvs{{\rm dg\kfield\text{-}vs}}
\def\HOM{{\textbf{Hom}}}
\def\Tw{{\rm Tw}}
\DeclareMathOperator{\hh}{HH}
\DeclareMathOperator{\h}{H}
\theoremstyle{definition}
\newtheorem{defn}{Definition}[section]
\newtheorem{prop}[defn]{Proposition}
\newtheorem{lem}[defn]{Lemma}
\newtheorem{thm}[defn]{Theorem}
\newtheorem{cor}[defn]{Corollary}
\newtheorem*{rem}{Remark}
\newtheorem*{nota}{Notation}
\begin{document}

\title[Derived $A$-infinity algebras in an operadic context]{Derived $A$-infinity algebras in an operadic context}
\author{Muriel Livernet}
\address{Universit\'e Paris 13, Sorbonne Paris Cit\'e, LAGA, CNRS (UMR 7539), 99 avenue
  Jean-Baptiste Cl\'ement, F-93430
  Villetaneuse, France}
\email{livernet@math.univ-paris13.fr}
\author{Constanze Roitzheim}
\address{School of Mathematics, Statistics and Actuarial Science, University of Kent, Cornwallis, Canterbury, Kent, CT2 7NF, UK}
\email{c.roitzheim@kent.ac.uk}
\author{Sarah Whitehouse}
\address{School of Mathematics and Statistics, Hicks Building,
University of Sheffield, S3 7RH, England}
\email{s.whitehouse@sheffield.ac.uk}
\keyword{Operads}
\keyword{A-infinity algebras}
\keyword{Koszul duality}
\subject{primary}{msc2000}{18D50}
\subject{primary}{msc2000}{16E45}
\subject{primary}{msc2000}{18G55}
\subject{primary}{msc2000}{18D10}

\date{\today}
\begin{abstract}
Derived $A$-infinity algebras were developed recently by Sagave. Their
advantage over classical $A$-infinity algebras is that no projectivity
assumptions are needed to study minimal models of differential graded
algebras. We explain how derived $A$-infinity algebras can be viewed as
algebras over an operad. More specifically, we describe how this operad
arises as a resolution of the operad $\dAs$ encoding bidgas, i.e. bicomplexes with an associative multiplication. This
generalises the established result describing the operad $A_\infty$ as a
resolution of the operad $\As$ encoding associative algebras. We further
show that Sagave's definition of morphisms agrees with the infinity-morphisms of
$dA_\infty$-algebras arising from operadic machinery. We also study the operadic
homology of derived $A$-infinity algebras.
\end{abstract}
\maketitle

\setcounter{tocdepth}{2}
\tableofcontents

\section*{Introduction}

Mathematical areas in which
$A_\infty$-structures arise range from geometry, topology and representation theory to mathematical physics. One important application is to the study of differential graded algebras via $A_\infty$-structures on their homology algebras. This is the theory of minimal models established by Kadeishvili in the 1980s \cite{Kad79}. However, the results concerning minimal models all have rather restrictive projectivity assumptions.

To bypass these projectivity assumptions, Sagave recently developed the notion of derived $A_\infty$-algebras~\linebreak \cite{Sag10}. Compared to classical $A_\infty$-algebras, derived $A_\infty$-algebras are equipped with an additional grading. Using this definition one can define projective resolutions that are compatible with $A_\infty$-structures. With these, Sagave established a notion of minimal models for differential graded algebras (dgas) whose homology is not
necessarily projective.

Sagave's descriptions of derived $A_\infty$-structures are largely formula-based.
In this paper, we provide an alternative description of these structures using operads. It is
not hard to write down an operad $dA_\infty$ that encodes derived $A_\infty$-structures, but we also explain the context into which this operad fits. The category we are going to work in is the category
$\BC_v$ of bicomplexes with no horizontal differential. We will start from an operad $\dAs$ in this category encoding bidgas, that is, monoids
in bicomplexes (see Definition~\ref{def:bidga}). Our main theorem shows that derived $A_\infty$-algebras are algebras over the operad $$dA_\infty=(\dAs)_\infty= \Omega((d\As)^{\ac}).$$ This means that the operad $dA_\infty$ is a minimal model of a well-known structure.

We can summarize our main result and its relation to the classical case in the following table.
\smallskip

\begin{center}
\begin{tabular}{ccc}
underlying category&operad $\cal{O}$&$\cal{O}$-algebra\\
\hline
differential graded $\kfield$-modules&$\As$&dga\\
&$A_\infty$&$A_\infty$-algebra\\
\hline
$\BC_v$&$\dAs$&bidga\\
&$dA_\infty$&derived $A_\infty$-algebra\\
\hline
\end{tabular}
\end{center}
\medskip

We hope that this provides a useful way of thinking about derived $A_\infty$-structures.
It should allow many operadic techniques to be applied to their study and we give two
examples. Firstly, we note a simple consequence of the homotopy transfer theorem.
Secondly we develop operadic homology of derived $A_\infty$-algebras and relate this
to formality of dgas.

\bigskip

This paper is organised as follows. We start by recalling some previous results in Section \ref{sec:review}. In the first part we summarise some definitions, conventions and results about derived $A_\infty$-algebras. The second part is concerned with classical $A_\infty$-algebras. We look at the operad $\As$ encoding associative algebras and summarise how to obtain the operad $A_\infty$ as a resolution of $\As$.

In Section~\ref{sec:dAs} we generalise this to the operad $\dAs$. More precisely, this operad lives in the category of bicomplexes with trivial horizontal differential. It encodes bidgas and can be described as the composition of the operad of dual numbers and $\As$ using a distributive law. The main result of this section is computing its Koszul dual cooperad.

Section~\ref{sec:dAinfty} contains our main result. We describe the operad $dA_\infty$ encoding derived
$A_\infty$-algebras and show that it agrees with the cobar construction of the reduced Koszul dual cooperad of $\dAs$.

In Section~\ref{sec:infmorphims} we consider $\infty$-morphisms and show that they coincide with the derived $A_\infty$-morphisms
defined by Sagave. We also give an immediate application of the operadic approach, by deducing the existence
of a $dA_\infty$-algebra structure on the vertical homology of a bidga over a field
from the homotopy transfer theorem.

In Section~\ref{sec:HH}, we study the operadic homology of derived $A_\infty$-algebras. By comparing this to the
previously defined Hochschild cohomology of~\cite{RoiWhi11}, we deduce a criterion for intrinsic formality of a dga.

We conclude with a short section outlining some areas for future investigation.

\bigskip
The second author was supported by EPSRC grant EP/G051348/1.

\section{A review of known results}\label{sec:review}

Throughout this paper let $\kfield$ denote a commutative ring unless stated otherwise. All operads considered are non-symmetric.

\subsection{Derived \texorpdfstring{$A_\infty$}{A-infinity}-algebras}

We are going to recall some basic definitions and results regarding derived $A_\infty$-algebras. This is just a brief recollection; we refer
to~\cite{Sag10} and~\cite{RoiWhi11} for more details.

\bigskip
We start by considering $(\mathbb{N},\mathbb{Z})$-bigraded $\kfield$-modules
\[
A = \bigoplus\limits_{i \in \mathbb{N}, j \in \mathbb{Z}} A^j_i.
\]
The lower grading is called the \emph{horizontal degree} and the upper grading the \emph{vertical degree}. Note that the horizontal grading is homological whereas the vertical grading is cohomological. A morphism of bidegree $(u,v)$ is then a morphism of bigraded modules that lowers the horizontal degree by $u$ and raises the vertical degree by $v$. We are observing the \emph{Koszul sign rule}, that is
\[
(f \otimes g)(x\otimes y) = (-1)^{pi+qj} f(x) \otimes g(y)
\]
if $g$ has bidegree $(p,q)$ and $x$ has bidegree $(i,j)$.
Here we have adopted the grading conventions used in~\cite{RoiWhi11}.

We can now say what a derived $A_\infty$-algebra is.
\begin{defn}\cite{Sag10}
A {\it derived $A_\infty$-structure} (or {\it $dA_\infty$-structure}
for short) on an $(\mathbb{N}$,$\mathbb{Z})$-bigraded $\kfield$-module $A$
consists of $\kfield$-linear maps
\[
m_{ij}: A^{\otimes j} \longrightarrow A
\]
of bidegree $(i,2-(i+j))$ for each $i \ge 0$, $j\ge 1$,  satisfying the equations
\begin{equation}\label{dobjectequation}
\sum\limits_{\substack{ u=i+p, v=j+q-1 \\ j=1+r+t}} (-1)^{rq+t+pj}
m_{ij} (1^{\otimes r} \otimes m_{pq} \otimes 1^{\otimes t}) = 0
\end{equation}
for all $u\ge 0$ and $v \ge 1$. A {\it $dA_\infty$-algebra} is a
bigraded $\kfield$-module together with a $dA_\infty$-structure.
\end{defn}

\begin{defn}\cite{Sag10}\label{def:morphism}
A map of  $dA_\infty$-algebras from $(A, m^A)$ to $(B, m^B)$ consists of a family of
$\kfield$-module maps $f_{ij}: A^{\otimes j}\rightarrow B$ of bidegree $(i,1-i-j)$ with $i\geq 0, j\geq 1$, satisfying
\begin{equation}\label{dmapequation}
\sum\limits_{\substack{ u=i+p, v=j+q-1 \\ j=1+r+t}} (-1)^{rq+t+pj}
f_{ij} (1^{\otimes r} \otimes m^A_{pq} \otimes 1^{\otimes t}) = \\
\sum\limits_{\substack{ u=i+p_1+\cdots+p_j,\\ v=q_1+\cdots+q_j }} (-1)^{\sigma} m^{B}_{ij}(f_{p_1q_1}\otimes\cdots\otimes f_{p_jq_j}),
\end{equation}
with
    $$
    \sigma=u+\sum\limits_{k=1}^{j-1}(p_k+q_k)(j+k)+q_k(\sum\limits_{s=k+1}^j p_s+q_s).
    $$
\end{defn}

Sagave does not define composition of maps of $dA_\infty$-algebras directly in terms of this definition.
Instead this is done via a certain reformulation as maps on the reduced tensor algebra; see~\cite[4.5]{Sag10}.
It follows that $dA_\infty$-algebras form a category.

Examples of $dA_\infty$-algebras include classical $A_\infty$-algebras, which are derived $A_\infty$-algebras concentrated in horizontal degree 0. Other examples are bicomplexes and bidgas, in the sense of the following definition.

\begin{defn}
\label{def:bidga}
A \emph{bidga} is a derived $A_\infty$-algebra with $m_{ij}=0$ for $i+j \geq 3$.
A \emph{morphism of bidgas} is a morphism of derived $A_\infty$-algebras $f_{ij}$ with $f_{ij}=0$ for $i+j\geq 2$.
\end{defn}

Sagave notes that this is equivalent to saying that a bidga is a monoid in the category of bicomplexes.
\medskip

For derived $A_\infty$-algebras, the analogue of a quasi-isomorphism is called an $E_2$-equivalence. To explain this, we need to discuss
twisted chain complexes. The terminology \emph{multicomplex} is also used for a twisted chain complex.

\begin{defn}\label{def:twistedchaincx}
A \emph{twisted chain complex} $C$ is an $(\N,\Z)$-bigraded $\kfield$-module with differentials $d_i^C:C\longrightarrow C$ of  bidegree
$(i, 1-i)$ for $i\geq 0$ satisfying $$\sum_{i+p=u} (-1)^{i}d_i^Cd_p^C=0$$ for $u\geq 0$. A \emph{map of twisted chain complexes} $C\longrightarrow D$ is a family of maps $f_i:C\longrightarrow D$ of bidegree $(i,-i)$ satisfying
    $$
    \sum_{i+p=u} (-1)^{i} f_id^C_p = \sum_{i+p=u}  d^D_i f_p.
    $$
The composition of maps $f:E\to F$ and $g: F\to G$ is defined by $(gf)_u=\sum_{i+p=u}g_if_p$ and the resulting category
is denoted ${\rm{tCh}}_k$.
\end{defn}

A derived $A_\infty$-algebra has an underlying twisted chain complex, specified by the maps $m_{i1}$ for $i\geq 0$.

If $f: C\longrightarrow D$ is a map of twisted chain complexes, then $f_0$ is a $d_0$-chain map and $H_*^v(f_0)$ induces a $d_1$-chain map.

\begin{defn}\label{def:equivs}
A map $f: C\longrightarrow D$ of twisted chain complexes is an \emph{$E_1$-equivalence}
if $H_t^v(f_0)$ is an isomorphism for
all $t\in\Z$
and an \emph{$E_2$-equivalence} if $H_s^h(H_t^v(f_0))$ is an isomorphism for
all $s\in\N$, $t\in\Z$.
\end{defn}

The first main advantage of derived $A_\infty$-structures over $A_\infty$-structures is that one has a reasonable notion of a minimal model for differential graded algebras without any projectivity assumptions on the homology.

\begin{thm}\cite{Sag10}
Let $A$ be a dga over $\kfield$. Then there is a degreewise $\kfield$-projective
$dA_\infty$-algebra $E$ together with an $E_2$-equivalence $E
\longrightarrow A$ such that
\begin{itemize}
\item $E$ is minimal (i.e.~$m_{01}=0$),
\item $E$ is unique up to $E_2$-equivalence,
\item together with the differential $m_{11}$ and the multiplication
$m_{02}$, $E$ is a termwise $\kfield$-projective resolution of the graded algebra
$H^*(A)$.
\end{itemize}
\end{thm}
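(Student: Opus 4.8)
The plan is to construct the minimal model $E$ explicitly by transferring the $dA_\infty$-structure of $A$ (viewed as a $dA_\infty$-algebra concentrated in horizontal degree $0$, with $m_{01}$ the internal differential, $m_{02}$ the multiplication and all other $m_{ij}=0$) onto a suitable bigraded replacement of its homology. First I would take, in each vertical degree, a termwise $\kfield$-projective resolution of $H^*(A)$ as a graded $\kfield$-module; assembling these resolutions gives an $(\N,\Z)$-bigraded $\kfield$-module $E$ that is degreewise projective, together with a quasi-isomorphism of the associated twisted chain complex (concentrated as a genuine chain complex of projectives) onto $A$ that is an $E_2$-equivalence by construction — $E_1$-homology computes the resolution's homology in each vertical degree, and a further $H^h$ recovers $H^*(A)$. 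The multiplication on $H^*(A)$ lifts, up to the usual homotopies, to a map $m_{02}$ on $E$, and $m_{11}$ is the resolution differential; the point is that because everything is projective one can now run the transfer.

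The second step is to invoke the operadic homotopy transfer theorem for the operad $dA_\infty$, which by the main theorem of this paper (Section~\ref{sec:dAinfty}) is $\Omega((\dAs)^{\ac})$, a cofibrant resolution of $\dAs$. Given the contraction data relating $A$ and $E$ in the category $\BC_v$ (or more precisely in twisted chain complexes, where the homological algebra is controlled by the $E_1$- and $E_2$-pages), the transfer theorem produces a $dA_\infty$-structure $\{m_{ij}\}$ on $E$ extending $m_{11}$ and $m_{02}$, together with an $\infty$-morphism $E\to A$ whose linear part is the chosen $E_2$-equivalence and is therefore itself an $E_2$-equivalence. Minimality, i.e.~$m_{01}=0$, is immediate since $E$ was chosen with zero internal vertical differential (the resolution is a resolution of $H^*(A)$, which already has the differential divided out). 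This establishes existence together with the third bullet point.

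For uniqueness up to $E_2$-equivalence, the argument is the standard obstruction-theoretic / model-categorical one: any two minimal models $E, E'$ with $E_2$-equivalences to $A$ receive a zig-zag of $\infty$-morphisms, and one shows an $E_2$-equivalence between minimal $dA_\infty$-algebras is automatically invertible as an $\infty$-morphism. This last fact follows because, on a minimal object, the linear part of an $\infty$-morphism being an $E_2$-equivalence forces it to be an isomorphism on the underlying bigraded module after passing to associated graded for the horizontal filtration, hence invertible; one then inverts the whole $\infty$-morphism degree by degree, solving at each stage a linear equation whose leading term is this isomorphism. Alternatively one can simply cite Sagave directly, since this is precisely \cite[Theorem 1.1 / Section 4]{Sag10}, and observe that our operadic reformulation is compatible with his via Section~\ref{sec:infmorphims}.

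The main obstacle is the homological bookkeeping needed to guarantee that the transfer data genuinely yields an \emph{$E_2$}-equivalence and not merely an $E_1$-equivalence: the ambient category $\BC_v$ has trivial horizontal differential, so one must be careful about which homotopies live where, ensuring that the chosen contraction is compatible with both the $d_0 = m_{01}$-homology (giving the $E_1$-page) and the induced $m_{11}$-homology on that (giving the $E_2$-page). Concretely, one needs the resolution to be chosen functorially enough — or the contracting homotopy explicit enough — that applying $H^v$ to $m_{01}$ and then $H^h$ to the induced $m_{11}$ returns an isomorphism onto $H^h H^v$ of $A$, which for $A$ a dga concentrated in horizontal degree $0$ is just $H^*(A)$ with zero higher pages. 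Once this is set up cleanly the transfer theorem does the rest, but getting the bicomplex-with-trivial-horizontal-differential conventions exactly right is where the real care is required.
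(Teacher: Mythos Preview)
The paper does not give its own proof of this theorem: it is stated in the review section as a result of Sagave, with a citation to~\cite{Sag10}, and no argument is supplied. So there is nothing to compare against, and the honest answer here is simply to cite Sagave, as you yourself suggest at the end of your second-to-last paragraph.

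Your attempt to \emph{reprove} the theorem via the operadic homotopy transfer theorem of this paper has a genuine gap, and it is not merely bookkeeping. The operad $dA_\infty$ lives in $\BC_v$, and the homotopy transfer theorem you invoke transfers structure along a homotopy retract in $\BC_v$, i.e.\ along maps controlled by the \emph{vertical} differential. Such a retract is in particular an $E_1$-equivalence. But a minimal $E$ has $m_{01}=0$, so its vertical homology is $E$ itself; for $E$ to be an $E_1$-retract of $A$ you would need $E$ concentrated in horizontal degree $0$ and isomorphic to $H^*(A)$, which is exactly the classical situation requiring projectivity of $H^*(A)$ and which the derived theory is designed to avoid. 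The map $E\to A$ you actually want is only an $E_2$-equivalence, and the machinery built in this paper does not produce or transfer along those: Section~\ref{sec:further} says explicitly that obtaining a homotopy theory with $E_2$-equivalences as weak equivalences requires changing the underlying category to twisted chain complexes and redeveloping the cobar construction there, and that this is left for future work.

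In short, your plan would need a homotopy transfer theorem for $dA_\infty$ relative to $E_2$-equivalences in twisted chain complexes, and that theorem is neither available in this paper nor a routine variant of what is. Sagave's original proof proceeds by a direct inductive construction rather than by any off-the-shelf transfer, precisely because no such transfer was (or is, in this paper) on hand.
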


The second and third authors then gave the analogue of Kadeishvili's formality criterion for dgas using Hochschild cohomology. They describe derived $A_\infty$-structures in terms of a Lie algebra structure on morphisms of the underlying $\kfield$-module $A$. Then they use this Lie algebra structure to define Hochschild cohomology for a large class of derived $A_\infty$-algebras and eventually reach the following result~\cite[Theorem 4.4]{RoiWhi11}. Recall that a dga is called intrinsically formal if any other dga $A'$ such that $H^*(A)\cong H^*(A')$
as associative algebras is quasi-isomorphic to $A$.

\begin{thm}\cite{RoiWhi11}
Let $A$ be a dga and $E$ its minimal
model with $dA_\infty$-structure $m$. By $\tilde{E}$, we denote the
underlying bidga of $E$, i.e. $\tilde{E}=E$ as $\kfield$-modules together
with $dA_\infty$-structure $\tilde{m}=m_{11}+m_{02}$. If
\[
HH^{m,2-m}_{bidga}(\tilde{E},\tilde{E})=0
\quad\quad\mbox{for\ } m \ge 3,
\]
then $A$ is intrinsically formal.
\end{thm}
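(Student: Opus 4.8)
The plan is to run the classical Kadeishvili obstruction argument for intrinsic formality inside the homotopy theory of derived $A_\infty$-algebras, using the description of $dA_\infty$-structures as Maurer--Cartan elements of a differential graded Lie algebra from~\cite{RoiWhi11}. Write $H=H^*(A)$ for the cohomology algebra. By Sagave's minimal model theorem $E$ is degreewise $\kfield$-projective with $m_{01}=0$, and its underlying bidga $\tilde{E}$, with structure maps $m_{11}$ and $m_{02}$, is a termwise $\kfield$-projective resolution of the graded algebra $H$; in particular $\tilde{E}$, and hence the groups $HH^{*,*}_{bidga}(\tilde{E},\tilde{E})$, depend only on $H$ up to isomorphism. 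I would first prove that the stated hypothesis forces $E$ to be $dA_\infty$-isomorphic to $\tilde{E}$ --- that is, that all structure maps $m_{ij}$ with $i+j\ge 3$ can be gauged away --- which exhibits $A$ as formal. The same hypothesis then also holds for the minimal model $E_B$ of any dga $B$ with $H^*(B)\cong H$, so the same conclusion gives $E_B\cong\tilde{E_B}\cong\tilde{E}\cong E$; passing through the $E_2$-equivalences $E\to A$ and $E_B\to B$ in Sagave's homotopy theory~\cite{Sag10} then shows that $A$ and $B$ are quasi-isomorphic as dgas, so $A$ is intrinsically formal.

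For the formality step I would use that, by~\cite{RoiWhi11}, the $dA_\infty$-structures on the bigraded $\kfield$-module underlying $E$ which restrict to the fixed bidga $\tilde{m}=m_{11}+m_{02}$ are the Maurer--Cartan elements of a complete filtered differential graded Lie algebra $\mathfrak{g}$ of coderivations, with differential induced by $\tilde{m}$ and with cohomology in the relevant bidegrees computing $HH^{*,*}_{bidga}(\tilde{E},\tilde{E})$, and that two such structures are $dA_\infty$-isomorphic via an isomorphism with identity linear part exactly when the corresponding Maurer--Cartan elements are gauge equivalent. Both $m$ and $\tilde{m}$ are Maurer--Cartan elements agreeing modulo the part of the weight filtration spanned by the maps with $i+j\ge 3$, and I would construct a gauge equivalence between them by induction on $n\ge 3$: assuming $m$ has been gauged to agree with $\tilde{m}$ on all $m_{ij}$ with $i+j\le n-1$, the collection of discrepant maps of total weight $i+j=n$ is a cocycle for the differential of $\mathfrak{g}$, and since every $m_{ij}$ with $i+j=n$ has vertical bidegree $2-n$ this collection is a single element of $C^{n,2-n}_{bidga}(\tilde{E},\tilde{E})$, whose class in $HH^{n,2-n}_{bidga}(\tilde{E},\tilde{E})$ is the obstruction to extending the gauge equivalence one further step. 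This group vanishes by hypothesis, so the cocycle is a coboundary and a further gauge transformation supported in weight $n$ produces agreement on all $m_{ij}$ with $i+j\le n$. Since the weight filtration on $\mathfrak{g}$ is complete and compatible with the bracket, the successive gauge transformations assemble into a well-defined gauge equivalence $m\sim\tilde{m}$, i.e. into a $dA_\infty$-isomorphism $E\xrightarrow{\cong}\tilde{E}$.

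The hard part will be the bigrading bookkeeping: one must verify, from the defining equations~\eqref{dobjectequation} and the bidegree conventions, that the inductive obstruction really is a single cocycle of bidegree $(n,2-n)$ --- so that its class lands in the one group $HH^{n,2-n}_{bidga}(\tilde{E},\tilde{E})$ named in the hypothesis --- and that the correct grading to induct on is the total weight $i+j$, not the horizontal and vertical weights separately. Two further points need care: the identification, following~\cite{RoiWhi11}, of the cohomology of $\mathfrak{g}$ around a bidga Maurer--Cartan element with bidga Hochschild cohomology in precisely the bidegrees occurring here, together with completeness and bracket-compatibility of the weight filtration; and, for the intrinsic rather than merely absolute formality statement, checking that the termwise $\kfield$-projective resolution bidga $\tilde{E}$ of $H$ is determined by $H$ up to bidga quasi-isomorphism, so that $HH^{*,*}_{bidga}(\tilde{E},\tilde{E})$ is genuinely an invariant of $H$. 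Apart from these, the induction is the standard deformation-theoretic one and I do not expect surprises.
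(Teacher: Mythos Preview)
Your proposal is correct and is essentially the argument of~\cite{RoiWhi11} (from which this theorem is quoted in the review section) and of the paper's own Section~\ref{sec:HH}, where the analogous statement is reproved for the operadic Hochschild cohomology. The only cosmetic difference is that you treat all $m_{ij}$ with $i+j=n$ together as a single Hochschild cocycle at each stage, whereas the paper (following~\cite{RoiWhi11}) kills the components $a_{kn}$ one bidegree at a time via a secondary induction using the perturbation Lemma~\ref{lem:perturb}; both routes place the obstruction in $HH^{n,2-n}_{bidga}(\tilde{E},\tilde{E})$ exactly as you identify.
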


\subsection{The operad \texorpdfstring{$\As$}{As}}

The goal of our paper is to describe derived $A_\infty$-algebras as algebras over an operad, and to show that this operad is a minimal model of a certain Koszul operad. The operad in question is an operad called $\dAs$ (defined in Section~\ref{sec:dAs}), which is a generalisation of the operad $\As$ that encodes associative algebras. So let us recall this strategy for $\As$ itself. For this subsection only, let $\kfield$ be a field. We work in the category of (cohomologically) differential graded $\kfield$-vector spaces, denoted $\dgkvs$.
\bigskip

We will use the notation $\cal F(M)$ for the free (non-symmetric) operad generated by a collection
$M=\{M(n)\}_{n\geq 1}$ of graded $\kfield$-vector spaces. It is weight graded by the number $s$ of vertices in the planar tree representation of elements of $\cal F(M)$ and we denote by $\cal F_{(s)}(M)$ the corresponding graded $\kfield$-vector space.
 We denote by $\cal P(M,R)$ the operad defined by generators and relations,
$\cal F(M)/(R)$.  A \emph{quadratic operad} is an operad such that $R\subset \cal F_{(2)}(M)$.

\begin{defn}
The operad $\As$ in $\dgkvs$ is given by
\[
\As= \cal P(\kfield\mu, \kfield as)
\]
where $\mu$ is a binary operation concentrated in degree zero, and $as=\mu \circ_1 \mu - \mu \circ_2 \mu$. The differential is trivial.
\end{defn}

It is easy to verify that an $\As$-algebra structure on the differential graded $\kfield$-vector space $A$, i.e. a morphism of dg operads $$\As \xrightarrow{\Phi} {\End}_A,$$ endows $A$ with the structure of an associative dga, with multiplication $$\Phi(\mu): A^{\otimes 2} \longrightarrow A.$$

\begin{thm}
The operad $\As$ is a Koszul operad, i.e.~the map of operads in $\dgkvs$
\[
 \Omega(\As^{\ac}) \longrightarrow \As
\]
is a quasi-isomorphism. Furthermore, an algebra over $\Omega(\As^{\ac})$ is precisely an $A_\infty$-algebra.
\end{thm}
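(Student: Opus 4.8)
The plan is to run the standard operadic Koszul-duality machinery (in the style of Ginzburg--Kapranov, Getzler--Jones and Loday--Vallette) on the quadratic presentation $\As=\cal P(\kfield\mu,\kfield as)$ recorded above, in three steps: compute the Koszul dual cooperad, identify algebras over its cobar construction, and prove that $\As$ is Koszul. For the first step, since $\As$ is quadratic, $\As^{\ac}$ is the quadratic cooperad $\cal C(s\kfield\mu,\, s^2\kfield as)$, i.e.\ the sub-cooperad of the cofree cooperad on the suspended generator $s\mu$ that is cogenerated by the suspended relation $s^2 as$. As there is a single generator and the single relation $as$ identifies the two ways of composing $\mu$ with itself, a short arity-by-arity computation (an induction on $n$) shows that $\As^{\ac}(n)$ is one-dimensional for every $n\ge 1$, of weight $n-1$, spanned by an element $\nu_n$ representable by any planar binary tree with $n$ leaves, all such representatives agreeing up to sign. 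One then records that the cooperad decomposition applied to $\nu_n$ is the signed sum $\sum\pm\,\nu_p\circ(\nu_1^{\otimes r}\otimes\nu_q\otimes\nu_1^{\otimes t})$ over all $p+q=n+1$ and $r+t=p-1$.

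For the second step, recall that by definition $\Omega(\As^{\ac})=(\cal F(s^{-1}\overline{\As^{\ac}}),\,d_\Omega)$, so the computation above shows that its underlying graded operad is free on a single generator $m_n:=s^{-1}\nu_n$ of arity $n$ and degree $2-n$ for each $n\ge 2$, and that $d_\Omega$ is the unique operadic derivation extending the desuspension of the decomposition map. Since the operad is free, a morphism of dg operads $\Phi\colon\Omega(\As^{\ac})\to\End_A$ is nothing but an arbitrary family of maps $m_n\colon A^{\otimes n}\to A$ of degree $2-n$, $n\ge 2$ (the unary operation $m_1$ being supplied by the internal differential of $A$), and the requirement that $\Phi$ be a chain map translates, via the formula for $d_\Omega$ above, exactly into the $u=0$ case of equations~\eqref{dobjectequation} --- that is, the Stasheff identities. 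Equivalently, $\Phi$ is a Maurer--Cartan element of the convolution Lie algebra $\Hom(\As^{\ac},\End_A)$. Either way, an $\Omega(\As^{\ac})$-algebra is precisely an $A_\infty$-algebra; the only subtle point here is to check that the signs produced by $d_\Omega$ coincide with those in~\eqref{dobjectequation}.

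For the third step, it remains to show that the canonical surjection $\Omega(\As^{\ac})\to\As$ (sending $m_2$ to $\mu$ and $m_n$ to $0$ for $n\ge 3$) is a quasi-isomorphism. By the general theory this is equivalent to acyclicity of the Koszul complex $\As^{\ac}\circ_\kappa\As$ attached to the canonical twisting morphism $\kappa\colon\As^{\ac}\to\As$, which can be checked arity by arity. I would do this by the rewriting (Gr\"obner/PBW) method: choose an admissible order on the planar trees generating $\cal F(\kfield\mu)$ so that the relation $as$ becomes a rewriting rule turning the ``left comb'' into the ``right comb'' on $3$-leaf trees; the only critical monomials then occur among the five planar binary trees with $4$ leaves, where a single, pentagon-shaped confluence check succeeds. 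The rewriting system is thus convergent, the normal forms are the right combs --- one in each arity, matching $\dim\As(n)=1$ --- hence they form a PBW basis of $\As$ and the PBW criterion gives Koszulness. (Alternatively, one identifies the arity-wise pieces of the Koszul complex, up to suspension, with reduced cellular chain complexes of the Stasheff associahedra, which are acyclic since those polytopes are balls; or one writes down an explicit contracting homotopy.) Acyclicity of the Koszul complex then yields the quasi-isomorphism $\Omega(\As^{\ac})\to\As$.

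The genuinely nontrivial ingredient is this last step, the Koszulness of $\As$; the first two steps are bookkeeping, the only real care being the tracking of suspensions and Koszul signs so that the cobar differential reproduces the classical $A_\infty$ relations.
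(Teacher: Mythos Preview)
Your proposal is correct. Note, however, that the paper does not actually prove this theorem: it is stated in the review section as a classical result, with only the remark that it ``can be proved using beautiful geometric and combinatorial methods such as the Stasheff cell complex.'' There is thus no paper proof to compare against in detail.

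That said, your primary route to Koszulness --- the rewriting/PBW method with the pentagon confluence check --- is different in flavour from the one the paper alludes to. The paper's hint points to the geometric argument (which you do mention as an alternative): identifying the arity-$n$ Koszul complex with the cellular chain complex of the Stasheff associahedron $K_n$, whose contractibility gives acyclicity directly. Your rewriting approach has the virtue of being purely algebraic and self-contained, avoiding any appeal to the topology of polytopes; the geometric approach, by contrast, is what makes the $A_\infty$ story ``beautiful'' in the sense the paper invokes, and explains conceptually why the operations $m_n$ are indexed the way they are. Both are standard and either would be acceptable here; your write-up is more thorough than what the paper itself provides.
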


Here, a quasi-isomorphism of operads is a quasi-isomorphism of dg-$\kfield$-vector spaces in each arity degree. We do not recall the definitions of the Koszul dual cooperad $(-)^{\ac}$ or the cobar construction $\Omega(-)$ here. (This is going to be discussed in greater detail for our computations later). Let us just mention now that the cobar construction of a cooperad is a free graded operad endowed with a differential built from the cooperad structure, so we can think of the map above as a free resolution of the operad $\As$. This result can be proved using beautiful geometric and combinatorial methods such as the Stasheff cell complex. Unfortunately, the derived case will not be as obviously geometric.

\bigskip
Our aim is to create an analogue of the above for the derived case. The first step is to consider working in a different category - instead of differential graded $\kfield$-vector spaces, we consider a category of graded chain complexes over a commutative ring $\kfield$.

The role of $\As$ in this case is going to be played by an operad $\dAs$, which encodes bidgas rather than associative dgas.

The first goal is showing that $\dAs$ is a Koszul operad, i.e. that
\[
(\dAs)_\infty := \Omega((d\As)^{\ac}) \longrightarrow \dAs
\]
is a quasi-isomorphism of operads in an appropriate category. We are going to achieve this by ``splitting'' $\dAs$ into two parts, namely the operad of dual numbers and $\As$ itself, via a distributive law.

Secondly, we are going to compute the generators and differential of $(\dAs)_\infty$ explicitly, so we can read off that $(\dAs)_\infty$-algebras give exactly derived $A_\infty$-algebras in the sense of Sagave.

Our work will show that the operad controlling derived $A_\infty$-algebras can be seen as a free resolution of the operad encoding bidgas, in the same sense that the classical $A_\infty$-operad is a free resolution of the operad encoding associative dgas.

\section{The operad \texorpdfstring{$\dAs$}{dAs}}\label{sec:dAs}

In the first part of this section, we recall some basic notions about the Koszul dual cooperad of a given operad
and we compute the Koszul dual of $\dAs$. Further details can be found in~\cite{Fresse04}, which covers Koszul
duality for operads over a general commutative ground ring.
We also refer to the book of Loday and Vallette~\cite{LodVal12}.

We are first going to specify the category we work in. Again, let $\kfield$ be a commutative ring.

\subsection{Vertical bicomplexes and operads in vertical bicomplexes}\label{SS:verticalbicomplex}

\begin{defn}
The category of \emph{vertical bicomplexes} $\BC_v$ consists of bigraded $\kfield$-modules as above together with a vertical differential
\[
d_A: A^j_i \longrightarrow A^{j+1}_{i}
\]
of bidegree $(0,1)$. The morphisms are those morphisms of bigraded modules commuting with the vertical differential. We denote by $\Hom(A,B)$ the set of morphisms (preserving the bigrading) from $A$ to $B$.

If $c,d \in A$ have bidegree $(c_1,c_2)$ and $(d_1,d_2)$ respectively we denote by $|c||d|$ the integer
$c_1d_1+c_2d_2$.

We define a degree shift operation on $\BC_v$ as follows.
Let $A \in \BC_v$. Then $sA$ is defined as $$(sA)_i^j=A_i^{j+1}$$ with $$d_{sA}(sx)=-s(d_Ax).$$
So if $c \in A$ is of bidegree $(c_1, c_2)$, then $sc \in sA$ is of bidegree $(c_1, c_2-1)$.

This shift is compatible with the embedding of differential graded complexes
into
$\BC_v$ given by $C^l_0=C^l$ and $C^l_k=0$, if $k>0$.

The tensor product of two vertical bicomplexes $A$ and $B$ is given by
    $$
    (A\otimes B)_u^v=\bigoplus_{i+p=u,\, j+q=v}A_i^j\otimes B_p^q,
    $$
with $d_{A\otimes B}=d_A\otimes 1+1\otimes d_B:(A\otimes B)_u^v\to (A\otimes B)_u^{v+1}$.
\end{defn}

Note that $\BC_v$ is isomorphic to the category of $\mathbb{N}$-graded chain complexes of $\kfield$-modules.
\medskip

There are two other sorts of morphism that we will consider later and we introduce notation for these now.
(Various alternative choices of notation are used in the literature.)
Let  $A$ and $B$ be two vertical bicomplexes. We write $\Hom_\kfield$ for morphisms of $\kfield$-modules.
We will denote by $\Mor(A,B)$ the vertical bicomplex given by
$$\Mor(A,B)_u^{v}=\prod_{\alpha,\beta} \Hom_\kfield(A^\beta_{\alpha},B^{\beta+v}_{\alpha-u}),$$
with vertical differential given by
$\partial_{\Mor}(f)= d_Bf - (-1)^{j}f d_A$
for $f$ of bidegree $(l,j)$.

We will denote by $\HOM(A,B)$ the (cohomologically) graded complex given by
$$\HOM(A,B)^k=\prod_{\alpha,\beta} \Hom_\kfield(A^\beta_{\alpha},B^{\beta+k}_{\alpha}),$$
with the same differential as above. One has
    $$
    \Hom(A,B)=\Mor(A,B)_0^0 \qquad\text{ and }\qquad \HOM(A,B)^*=\Mor(A,B)_0^*.
    $$

\begin{defn} A \emph{collection} in $\BC_v$ is a collection $A(n)_{n\geq 1}$ of vertical bicomplexes.
We denote by $\cal C\BC_v$ the category of collections of vertical bicomplexes. This category is endowed with a monoidal structure, the plethysm given by, for any two collections $M$ and $N$,

\[
(M\circ N)(n)=\bigoplus_{k,\ l_1+\cdots+l_k=n} M(k)\otimes N(l_1)\otimes\cdots\otimes N(l_k).
\]

The unit for the plethysm is given by the collection
\[
I(n)=\begin{cases} 0, & \text{ if } n\not=1, \\
\kfield \text { concentrated in bidegree } (0,0), & \text{ if } n=1.\end{cases}
\]

Given two collections $A$ and $B$ in $\BC_v$, one can consider again the three collections
\begin{itemize}
 \item $\Hom(A,B)(n):=\{\Hom(A(n),B(n)\}_{n\geq 1}$ in the category of $\kfield$-modules,
\item  $\Mor(A,B)(n):=\{\Mor(A(n),B(n)\}_{n\geq 1}$ in the category of vertical bicomplexes and
\item  $\HOM(A,B)(n):=\{\HOM(A(n),B(n)\}_{n\geq 1}$ in the category of complexes.
\end{itemize}
\end{defn}

\begin{defn} A (non-symmetric) \emph{operad} in $\BC_v$ is a monoid in $\cal C\BC_v$.
This is the usual definition of operads in the symmetric monoidal category $(\BC_v,\otimes)$.
\end{defn}

For a vertical bicomplex $A$, the \emph{endomorphism operad} $\End_A$ is  the operad in vertical bicomplexes given by $\End_A(n)=\Mor(A^{\otimes n},A)$, where
the operad structure is given by the composition of morphisms, as usual.

\subsection{The operad \texorpdfstring{$\dAs$}{dAs}}
We now describe the operad in $\BC_v$ that encodes bidgas.

\begin{defn}The operad $\dAs$ in $\BC_v$ is defined as $\cal P(M_{\dAs},R_{\dAs})$ where
$$M_{\dAs}(n)=\begin{cases} 0, & \text{if } n>2, \\
\kfield  m_{02} \text{ concentrated in bidegree } (0,0), & \text {if } n=2, \\
\kfield  m_{11} \text{ concentrated in bidegree } (1,0), & \text {if } n=1, \end{cases}$$
and
$$R_{\dAs}=\kfield (m_{02}\circ_1 m_{02}-m_{02}\circ_2 m_{02})\oplus
\kfield m_{11}^2 \oplus \kfield (m_{11}\circ_1 m_{02}-m_{02}\circ_1 m_{11} - m_{02}\circ_2 m_{11}),$$
with trivial vertical differential.
\end{defn}

This operad is clearly quadratic.
\smallskip

The following result is now essentially a matter of definitions, but we include the details for completeness.

\begin{prop}\label{prop:bidga}
The category of $\dAs$-algebras in $\BC_v$ is isomorphic to the category of bidgas.
\end{prop}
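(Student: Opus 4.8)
The strategy is to unwind both sides of the claimed isomorphism and check they match on objects and on morphisms. Recall that a $\dAs$-algebra structure on $A \in \BC_v$ is a morphism of operads $\Phi \colon \dAs \to \End_A$ in $\BC_v$. Since $\dAs = \cal P(M_{\dAs}, R_{\dAs})$ is quadratic, such a morphism is determined freely by the images of the generators $m_{02}$ and $m_{11}$, subject to the requirement that the relations in $R_{\dAs}$ are sent to zero and that $\Phi$ respects the (trivial) vertical differential on $\dAs$.

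\textbf{Step 1: From a $\dAs$-algebra to a bidga.} Given $\Phi$, set $\mu = \Phi(m_{02}) \colon A^{\otimes 2} \to A$ and $\delta = \Phi(m_{11}) \colon A \to A$. By the bidegree conventions, $\mu$ has bidegree $(0,0)$ and $\delta$ has bidegree $(1,0)$; moreover, since $\Phi$ commutes with the vertical differentials and $\End_A$ carries the differential $\partial_{\Mor}$, the elements $\mu$ and $\delta$ are cycles, i.e. $\partial_{\Mor}(\mu) = 0$ and $\partial_{\Mor}(\delta) = 0$, which says precisely that $\mu$ and $\delta$ commute with $d_A$ in the graded-Leibniz sense. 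The three generating relations of $R_{\dAs}$ translate respectively into: associativity of $\mu$; $\delta^2 = 0$; and the Leibniz rule $\delta \circ \mu = \mu \circ (\delta \otimes 1) + \mu \circ (1 \otimes \delta)$ (with the appropriate Koszul sign). Reading $m_{02} = \mu$ as multiplication, $m_{11} = \delta$ as the horizontal differential, and $d_A$ as the vertical differential, these are exactly the axioms defining a monoid in bicomplexes, i.e. a bidga in the sense of Definition~\ref{def:bidga} --- equivalently, a derived $A_\infty$-algebra with $m_{ij} = 0$ for $i+j \geq 3$, where the surviving structure maps $m_{02}, m_{11}$ satisfy the relevant cases of equation~\eqref{dobjectequation}.

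\textbf{Step 2: The inverse construction.} Conversely, given a bidga $A$, its multiplication, horizontal differential and vertical differential define maps of the correct bidegrees which are cycles in $\End_A$ and which kill the relations in $R_{\dAs}$; by the universal property of $\cal P(M_{\dAs}, R_{\dAs})$ this extends uniquely to an operad morphism $\dAs \to \End_A$. These two assignments are mutually inverse by construction, since each is determined by the values on generators.

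\textbf{Step 3: Morphisms.} Finally one checks the correspondence is functorial: a morphism of $\dAs$-algebras is, by definition, a morphism $f \colon A \to B$ in $\BC_v$ compatible with the operad actions, which forces $f$ to commute with $d_A$, with the multiplications, and with the horizontal differentials --- precisely a morphism of bidgas (equivalently, a morphism of derived $A_\infty$-algebras with $f_{ij} = 0$ for $i+j \geq 2$, namely $f = f_{01}$). So the assignment extends to an isomorphism of categories. The only mild subtlety, rather than a genuine obstacle, is bookkeeping the Koszul signs: one must verify that the signs produced by $\partial_{\Mor}$ and by the operadic composition in $\End_A$ match the signs appearing in Sagave's equation~\eqref{dobjectequation} in the relevant low-arity cases, and that the relation $m_{11}\circ_1 m_{02} - m_{02}\circ_1 m_{11} - m_{02}\circ_2 m_{11}$ carries the sign convention consistent with the Koszul sign rule adopted earlier in the paper. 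This is routine but is the one place where care is needed.
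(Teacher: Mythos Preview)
Your proof is correct and follows essentially the same approach as the paper: both arguments identify the images of the generators $m_{02}$ and $m_{11}$ under an operad map $\dAs \to \End_A$, read off associativity, $\delta^2=0$, and the horizontal Leibniz rule from $R_{\dAs}$, and then extract the remaining bidga axioms ($d^v$ is a derivation for $\mu$, and $d^v$ commutes with $\delta$) from the conditions $\partial_{\Mor}(\mu)=0$ and $\partial_{\Mor}(\delta)=0$. Your additional Step 2 (inverse via the universal property) and the explicit remark on sign bookkeeping are not in the paper's proof but are routine elaborations of the same argument.
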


\begin{proof}
A $\dAs$-algebra structure on a vertical bicomplex $A$ is given by a morphism of operads
\[
\theta: \dAs \longrightarrow \End_A.
\]
Since $A$ is a vertical bicomplex, it is $(\N,\Z)$-graded and comes with a vertical differential
$d_A=d^v$ of bidegree $(0,1)$. From the images of the operad generators we have morphisms
\begin{align*}
m&=\theta(m_{02}):A^{\otimes 2}\longrightarrow A,\\
d^h&=\theta(m_{11}):A\longrightarrow A,
\end{align*}
of bidegree $(0,0)$ and $(1,0)$ respectively.

The operad relations tell us precisely that $m$ is associative, that $d^h$ is a differential and that
$d^h$ is a derivation with respect to $m$. The fact that $\theta$ is a morphism of operads in $\BC_v$, and that the
differential on each $\dAs(n)$ is trivial, gives us two further relations:
\begin{align*}
\partial_{\Mor}(m)&=0,\\
\partial_{\Mor}(d^h)&=0.
\end{align*}
The first of these relations tells us that $d^v$ is a derivation with respect to $m$ and the second
that $d^vd^h-d^hd^v=0$.
This gives $A$ precisely the structure of a bidga (with exactly Sagave's sign conventions).

A morphism of $\dAs$-algebras $f:A\longrightarrow B$ is a map of vertical bicomplexes which also
commutes with $m$ and $d^h$. This is precisely a morphism of bidgas.
\end{proof}

\bigskip
Let us describe the operad $\dAs$ in a little more detail. Let $m_k$ denote any $(k-1)$-fold composite of $m_{02}$. (Because of the associativity relation, $m_k$ does not depend on the choice of composition.) Due to the ``Leibniz rule relation'' every element of $\dAs$ in arity $k$ can be written as a $\kfield$-linear combination of the elements
\[
m_k(m_{11}^{\epsilon_1},...,m_{11}^{\epsilon_k})
\]
with $\epsilon_i \in \mathbb{Z}/2$. The partial composition is given by
\begin{multline}
m_l(m_{11}^{\epsilon_1},...,m_{11}^{\epsilon_l}) \circ_i m_k(m_{11}^{\delta_1},...,m_{11}^{\delta_k} )\\
= (-1)^{\alpha}\nonumber
    \begin{cases}
        \sum\limits_{s=1}^k (-1)^{\beta}
            m_{k+l-1}(m_{11}^{\epsilon_1},...,m_{11}^{\epsilon_{i-1}},m_{11}^{\delta_1},...,m_{11}^{\delta_s + 1},...,m_{11}^{\delta_k},
            m_{11}^{\epsilon_{i+1}},...,m_{11}^{\epsilon_l}), & \text{ if } \epsilon_i=1, \\
            m_{k+l-1}(m_{11}^{\epsilon_1},...,m_{11}^{\epsilon_{i-1}},m_{11}^{\delta_1},...,m_{11}^{\delta_s },...,m_{11}^{\delta_k},
            m_{11}^{\epsilon_{i+1}},...,m_{11}^{\epsilon_l}), & \text{ if } \epsilon_i=0,
    \end{cases}
\end{multline}
where $\alpha = \left(\sum\limits_{j=i+1}^l \epsilon_j\right)\left(\sum\limits_{r=1}^k \delta_r\right)$ and
$\beta= \sum\limits_{r=1}^{s-1} \delta_r$.

We see that we have an isomorphism of bigraded $\kfield$-modules,
\[
\dAs(n) \cong \kfield[x_1,...,x_n]/(x_1^2,...,x_n^2), \qquad |x_i|=(1,0)
\]
determined by assigning the monomial $x_1^{\epsilon_1}\dots x_n^{\epsilon_n}$
to the element $m_n(m_{11}^{\epsilon_1},...,m_{11}^{\epsilon_n})$.
\medskip

Let $\cal D$ denote the operad of dual numbers in the category of vertical bicomplexes, namely
\[
\cal D = \cal P(\kfield m_{11}, \kfield m_{11}^2)
\]
with trivial differential.
\medskip

We can now reformulate the above description of $\dAs$ in terms of
plethysm and distributive laws; see~\cite[8.6]{LodVal12}.

\begin{lem}\label{lem:distributive}
The map
\[
\varphi: \cal D \circ \As \longrightarrow \As \circ \cal D
\]
determined by
\[
\varphi: m_{11}\circ_1 m_{02} \mapsto m_{02} \circ_1 m_{11} +m_{02} \circ_2 m_{11}
\]
defines a distributive law, such that the induced operad structure on $ \As \circ \cal D$
coincides with the operad $\dAs$.
\end{lem}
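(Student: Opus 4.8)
The plan is to verify the three conditions that make $\varphi$ a distributive law in the sense of~\cite[8.6]{LodVal10}, and then to identify the resulting composite operad $\As\circ\cal D$ with $\dAs$ by comparing generators and relations.

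First I would observe that a map of the stated form is forced to exist. The collection $\cal D\circ\As$ in the relevant arities is spanned (over $\kfield$) by the rewritable term $m_{11}\circ_1 m_{02}$ together with terms already ``in normal form''; since $\cal D(1)=\kfield\{1,m_{11}\}$ and $\cal D(n)=0$ for $n\geq 2$, the only place where a ``$\cal D$ on the outside'' meets an ``$\As$ on the inside'' nontrivially is precisely $m_{11}\circ_1 m_{02}$ (in arity $2$). So $\varphi$ is determined by its value there, and one checks it is a well-defined morphism of collections in $\BC_v$ — in particular that it respects the bidegrees, which it does since $m_{11}$ has bidegree $(1,0)$ and $m_{02}$ has bidegree $(0,0)$, so both sides have bidegree $(1,0)$ in arity $2$.

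Next I would check the distributive law axioms. Following the reformulation in~\cite{LodVal10} for quadratic data, one must verify: (i) compatibility of $\varphi$ with the operad structure of $\cal D$ on the left and of $\As$ on the right (the ``pentagon''-type coherences $\varphi\circ(m_{\cal D}\circ 1)=\dots$ and $\varphi\circ(1\circ m_{\As})=\dots$); and (ii) that the relations of $\cal D$ and $\As$ are preserved, i.e.\ that applying $\varphi$ to $m_{11}^2\circ_1 m_{02}$ and to $m_{11}\circ_1(m_{02}\circ_1 m_{02}-m_{02}\circ_2 m_{02})$ lands in the ideal generated by the relations of $\As\circ\cal D$. The first is immediate because $m_{11}^2=0$ already in $\cal D$; for the second one computes that $\varphi$ sends $m_{11}\circ(m_{02}\circ_1 m_{02})$ to the sum of the three ways of inserting one $m_{11}$ into $m_3$, and similarly for $m_{02}\circ_2 m_{02}$, and these agree by associativity of $m_{02}$ together with the sign bookkeeping recorded in the explicit partial-composition formula displayed just before the lemma. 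This verification is essentially the content of that displayed formula, read backwards. These checks also confirm there is no obstruction to extending $\varphi$ to all of $\cal D\circ\As$ compatibly, so by the distributive law theorem $\As\circ\cal D$ acquires an operad structure with $\As$ and $\cal D$ as suboperads.

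Finally I would identify this operad with $\dAs$. There is an evident morphism $\dAs\to\As\circ\cal D$ of operads in $\BC_v$ sending $m_{02}$ and $m_{11}$ to the corresponding generators, well defined because the three quadratic relations of $\dAs$ hold in $\As\circ\cal D$: the first is the associativity relation from the $\As$ factor, the second is $m_{11}^2=0$ from the $\cal D$ factor, and the third (the Leibniz relation) is exactly the defining equation of $\varphi$. Conversely, the normal-form description given above — every arity-$n$ element of $\dAs$ is uniquely a $\kfield$-combination of the $m_n(m_{11}^{\epsilon_1},\dots,m_{11}^{\epsilon_n})$, so $\dAs(n)\cong\kfield[x_1,\dots,x_n]/(x_i^2)$ — matches termwise the underlying collection of $\As\circ\cal D$, since $(\As\circ\cal D)(n)=\As(n)\otimes\cal D(1)^{\otimes n}$ has basis $\mu_n(m_{11}^{\epsilon_1},\dots,m_{11}^{\epsilon_n})$. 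Hence the morphism is an isomorphism of collections, and being a morphism of operads, it is an isomorphism of operads. I expect the main obstacle to be purely bookkeeping: matching the signs in the partial-composition formula for $\dAs$ with the signs produced by the distributive-law recipe applied to $\varphi$ (including the Koszul sign rule on $\BC_v$ arising from the bidegree of $m_{11}$), rather than anything structural.
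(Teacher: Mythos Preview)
Your approach is correct but follows a slightly different logical order than the paper's. The paper does not verify the pentagon-type coherences for $\varphi$ directly. Instead, it first observes that $\varphi$ is a rewriting rule in the sense of~\cite[8.6.3]{LodVal10}, so that $\dAs\cong\As\vee_\varphi\cal D$; then it uses the normal-form description of $\dAs$ (the isomorphism $\dAs(n)\cong\kfield[x_1,\dots,x_n]/(x_i^2)$) to conclude that the canonical map $\As\circ\cal D\to\As\vee_\varphi\cal D$ is an isomorphism of collections; and finally it invokes~\cite[Proposition~8.6.4]{LodVal10}, which says precisely that this isomorphism forces $\varphi$ to induce a distributive law. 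Thus in the paper the module-level identification does double duty: it both establishes the distributive law and identifies the resulting operad with $\dAs$. Your route---first checking the coherence conditions by hand in weight~$3$, then separately building the isomorphism $\dAs\to\As\circ\cal D$---is perfectly valid but involves some redundancy, and your description of step~(i) (the ``pentagon'' axioms) is a little loose: for quadratic data the correct criterion is confluence of the rewriting in weight~$3$, which is essentially your step~(ii), not a separate pentagon check.
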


\begin{proof}
We adopt the notation and terminology of~\cite[8.6.3]{LodVal12}.
We define
    $$
    \varphi:\kfield m_{11}\circ_{(1)}\kfield m_{02}\longrightarrow \kfield m_{02}\circ_{(1)}\kfield m_{11}
    $$
as above. This gives a rewriting rule for the quadratic operads $\cal{D}$ and $\As$ and it is clear that
$\dAs$ is isomorphic to $\As\vee_\varphi \cal D$.
From the description of the operad $\dAs$ above, we see that the induced map
$\As\circ\cal{D}\longrightarrow \As\vee_\varphi \cal D\cong \dAs$ is an isomorphism.
So, by~\cite[Proposition 8.6.4]{LodVal12}, $\varphi$ induces a distributive law and an isomorphism
of operads $\As\circ\cal{D}\longrightarrow \As\vee_\varphi \cal D$.
\end{proof}

For $\cal P = \cal P(M,R)$ a quadratic operad, the \emph{Koszul dual cooperad} $\cal P^{\ac}$ of $\cal P$ is given by
\[
\cal P^{\ac} = {\cal C}^c(sM,s^2 R).
\]
Here ${\cal C}^c(E,R)$ denotes the  cooperad cogenerated by $E$ with corelations $R$.
(For a description see \cite[Section 7.1.4]{LodVal12}.)

There are two ways of describing the cooperad $(d\As)^{\ac}$, either by describing the distributive law $$\cal D^{\ac}\circ \As^{\ac}\rightarrow \As^{\ac}\circ \cal D^{\ac}$$ or by describing the elements of ${\cal C}^c(s(\kfield m_{11}\oplus\kfield m_{02}), s^2R_{\dAs})$ in the cofree cooperad ${\cal F}^c(s(\kfield m_{11}\oplus\kfield m_{02}))$. The first description implies that for every $n$, $(d\As)^{\ac}(n)$ is a free $\kfield$-module.

\begin{prop}\label{prop:distributivedual}
The underlying collection of the cooperad ${\dAs}^{\ac}$ is isomorphic to that of
$${\cal D}^{\ac}\circ \As^{\ac}=\kfield[\mu_{11}]\circ \As^{\ac}$$ where
$\mu_{11}$ has bidegree $(1,-1)$. Hence, as a $\kfield$-module, $(d\As)^{\ac}(n)$ is free with basis given by elements $\nu_{in}$ of bidegree $(i,1-i-n)$. These elements are in 1-to-1 correspondence with
the elements $s(m_{11})^i\circ \mu_n$ in $\cal D^{\ac}\circ\As^{\ac}$.
\end{prop}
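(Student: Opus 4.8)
The plan is to exploit Lemma~\ref{lem:distributive} and the fact that Koszul duality is compatible with distributive laws. Since $\dAs \cong \As \circ \cal D$ arises from a distributive law $\varphi : \cal D \circ \As \to \As \circ \cal D$, the standard theory (see~\cite[Section~8.6]{LodVal10}) tells us that the Koszul dual cooperad is controlled by the ``dual'' distributive law $\varphi^{\ac} : \cal D^{\ac} \circ \As^{\ac} \to \As^{\ac} \circ \cal D^{\ac}$, and in particular that the underlying collection of $\dAs^{\ac}$ is isomorphic to $\cal D^{\ac} \circ \As^{\ac}$. So the first step is simply to invoke this: $\dAs^{\ac}$ and $\cal D^{\ac} \circ \As^{\ac}$ agree as collections of $\kfield$-modules.

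Next I would identify $\cal D^{\ac}$ and $\As^{\ac}$ explicitly. For the operad of dual numbers $\cal D = \cal P(\kfield m_{11}, \kfield m_{11}^2)$, the Koszul dual cooperad is $\cal C(s\kfield m_{11}, s^2 \kfield m_{11}^2)$; since $sm_{11}$ has bidegree $(1,-1)$ (the vertical shift of bidegree $(1,0)$), the cofree-type construction gives $\cal D^{\ac}(1)$ spanned in each arity-$1$ weight $i$ by a single element, which I will write as $(sm_{11})^i$, of bidegree $(i,-i)$. Writing $\mu_{11}$ for this generator emphasises that $\cal D^{\ac} \cong \kfield[\mu_{11}]$ with $\mu_{11}$ of bidegree $(1,-1)$. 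For $\As^{\ac}$, it is classical that $\As^{\ac}(n)$ is one-dimensional, spanned by an element $\mu_n$ of (internal) degree $n-1$; in the bigraded setting this $\mu_n$ sits in bidegree $(0, 1-n)$ (horizontal degree $0$, since $m_{02}$ had horizontal degree $0$, and vertical degree coming from the $n-1$ suspensions, with the sign/degree convention as in the embedding of $\dgkvs$ into $\BC_v$).

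Combining these via the plethysm formula, the arity-$n$ part of $\cal D^{\ac} \circ \As^{\ac}$ decomposes over $k, l_1 + \cdots + l_k = n$ as $\bigoplus \cal D^{\ac}(k) \otimes \As^{\ac}(l_1) \otimes \cdots \otimes \As^{\ac}(l_k)$; but $\cal D^{\ac}(k) = 0$ unless $k=1$, so this collapses to $\cal D^{\ac}(1) \otimes \As^{\ac}(n)$, which is free with basis $\{(sm_{11})^i \circ \mu_n\}_{i \ge 0}$. Writing $\mu_{in}$ for the element corresponding to $(sm_{11})^i \circ \mu_n$ and adding the bidegrees — $(i,-i)$ from the $\cal D^{\ac}$-factor and $(0, 1-n)$ from the $\As^{\ac}$-factor — gives bidegree $(i, 1-i-n)$ for $\mu_{in}$, as claimed. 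This establishes the $1$-to-$1$ correspondence and the freeness.

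The main obstacle is the bookkeeping of bidegrees and signs: one must be careful that the suspension operation $s$ on $\BC_v$ (with $d(sx) = -s(dx)$ and $(sA)_i^j = A_i^{j+1}$) interacts correctly with the definitions of $\cal P^{\ac} = \cal C(sM, s^2R)$, so that the generators land in precisely bidegrees $(1,-1)$ and $(0,1-n)$ rather than, say, $(-1,1)$ or $(0,n-1)$. A secondary subtlety is justifying that the abstract isomorphism of collections coming from the distributive-law machinery of~\cite{LodVal10} is genuinely realised by the map sending $s(m_{11})^i \circ \mu_n$ to a basis element $\mu_{in}$ of $\dAs^{\ac}(n)$ — i.e. that the distributive-law isomorphism is compatible with the evident generators on both sides. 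This is analogous to the argument in Lemma~\ref{lem:distributive} where the explicit description of $\dAs$ was used to check that the induced map $\As \circ \cal D \to \As \vee_\varphi \cal D$ is an isomorphism, and I would reference that lemma and~\cite[Proposition~8.6.4]{LodVal10} for the dual statement.
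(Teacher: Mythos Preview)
Your proposal is correct and follows essentially the same route as the paper: invoke Lemma~\ref{lem:distributive} together with the Loday--Vallette compatibility of Koszul duality with distributive laws to identify the underlying collection of $\dAs^{\ac}$ with $\cal D^{\ac}\circ\As^{\ac}$, then compute $\cal D^{\ac}$ and $\As^{\ac}$ explicitly and read off the basis and bidegrees. The only refinement is that the relevant reference is~\cite[Proposition~8.6.15]{LodVal10} (the cooperadic statement $(\As\vee_\varphi\cal D)^{\ac}\cong\cal D^{\ac}\circ\As^{\ac}$) rather than Proposition~8.6.4, which is the operad-level statement you already used in Lemma~\ref{lem:distributive}.
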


\begin{proof}
The first part of the claim follows from Lemma~\ref{lem:distributive}, since
$\dAs\cong \As\vee_{\varphi}\cal{D}$ and by~\cite[Proposition 8.6.15]{LodVal12},
there is an isomorphism of underlying collections
$(\As\vee_{\varphi}\cal{D})^{\ac}\cong \cal{D}^{\ac}\circ\As^{\ac}$.

The cooperad structures of $\cal D^{\ac}$ and $\As^{\ac}$ are well-known and can be shown by induction with the methods used in Theorem~\ref{T:cooperad}. In arity $n$, $\As^{\ac}(n)$ is a free $\kfield$-module on the generator $\mu_n$. The element $\mu_n$ has bidegree $(0,1-n)$. The cooperad $\cal D^{\ac}$ is concentrated in arity 1. It is the free cooperad on the generator $sm_{11}$. This implies that $(d\As)^{\ac}(n)$ is free on the images $\nu_{in}$ in $(\dAs)^{\ac}(n)$ of the generators
$$(sm_{11})^i\circ\mu_n \in \cal (D^{\ac}\circ\As^{\ac})(n).$$
We can read off a generator's bidegree as
\[
|\nu_{in}|  =  i(|m_{11}| + |s|) + |\mu_n| = (i,1-i-n).
\]
\end{proof}

\begin{nota}
Let $\mathcal{C}$ be a cooperad and $c \in \mathcal{C}(n)$. We are going to describe the cocomposition
\[
\Delta: \mathcal{C} \longrightarrow \mathcal{C} \circ \mathcal{C}.
\]
We write
\[
\Delta(c)=\sum\limits_{j,|I|=n} c_j; c_I.
\]
Here, $I = (i_1,...,i_j)$ is a $j$-tuple with  $|I|=i_1+\cdots+i_j$, and
\[
c_I=c_{i_1}\otimes\cdots\otimes c_{i_j} \in \mathcal{C}^{\otimes j}.
\]
If $\mathcal{C}={\cal F}^c(V)$ is a cofree cooperad cogenerated by a collection $V$, then it has a description in terms of trees whose vertices are labelled by elements of $V$; see~\cite[5.8.7]{LodVal12}. Moreover if $V(n)$ is a free $\kfield$-module for each $n$, then so is
$\mathcal{C}(n)$, and a basis as a free $\kfield$-module is given by planar trees whose vertices are labelled by a basis of $V$. If the root of such a tree has arity $k$ and is labelled by $v$ we denote it by $v(t^1,\ldots,t^k)$ where $t^1,\ldots, t^k$ are elements of $\mathcal{C}={\cal F}^c(V)$. Remembering that
\[
\Delta(t^r)=\sum t^r_{j_r};t^r_{I_r}
\]
one obtains the formula
\begin{equation}\label{F:deltacoop}
\Delta(v(t^1,\ldots,t^k))=1;v(t^1,\ldots,t^k)+
\sum (-1)^{\sum\limits_{r=1}^{k-1} |t^r_{I_r}|(\sum\limits_{s=r+1}^k |t^s_{j_s}|)} v(t^1_{j_1},\ldots,t^k_{j_k}); t^1_{I_1}\otimes\cdots\otimes t^k_{I_k}.
\end{equation}
\end{nota}

We now compute the full structure of $(\dAs)^{\ac}$. From Proposition~\ref{prop:distributivedual} we already know the structure of its underlying bigraded $\kfield$-modules, and we can use (\ref{F:deltacoop}) to write down the cocomposition of its basis elements.

We remark that we have chosen to work directly with the cooperad $(d\As)^{\ac}$, rather than with the operad
$(\dAs)^{!}$. This is to avoid taking linear duals, which can be badly behaved over a general ground ring.

\begin{thm}\label{T:cooperad}
The cooperad $(d\As)^{\ac}$ is a sub-cooperad of ${\cal F}^c(sM_{\dAs})$ with trivial differential. Its underlying collection consists of free $\kfield$-modules with basis
$\{\mu_{ij}, i\geq 0,j\geq 1\}$
such that $\mu_{01}$ is the identity of the cooperad,
$\mu_{02}=sm_{02}$ and $\mu_{11}=sm_{11}\in {\cal F}^c(sM_{\dAs})$. The other $\mu_{ij}$ are defined inductively via
$$\begin{array}{rcll}
\mu_{i1}&=&\mu_{11}(\mu_{i-1,1}), & \text{for\ } i\geq 1, \\
\mu_{0n}&=&\sum\limits_{p+q=n}(-1)^{p(q+1)}\mu_{02}(\mu_{0p},\mu_{0q}), & \text{for\ } n\geq 2, \\
\mu_{ij}&=&\mu_{11}(\mu_{i-1,j})+\sum\limits_{r+t=i\atop{s+w=j}}(-1)^{|s\mu_{rs}||\mu_{tw}|+rw} \mu_{02}(\mu_{rs},\mu_{tw}),
                        & \text{for\ } i\geq 1, j\geq 2.
  \end{array}$$
The element $\mu_{ij}$ has bidegree $(i,1-i-j)$. These elements satisfy
\begin{equation}\label{F:Delta}
    \Delta(\mu_{uv})=\sum\limits_{i+p_1+\cdots+p_j=u\atop{q_1+\cdots+q_j=v}}
                    (-1)^{X\left((p_1,q_1), \dots, (p_j,q_j)\right)}
                        \mu_{ij};\mu_{p_1q_1}\otimes\cdots\otimes \mu_{p_j q_j},
\end{equation}
where
\begin{equation}
    \begin{aligned}\label{F:sign}
    X\left((p_1,q_1), \dots, (p_j,q_j)\right) &= & \sum\limits_{k=1}^{j-1} |s\mu_{p_kq_k}|(\sum\limits_{l=k+1}^j |\mu_{p_lq_l}|)+
            \sum\limits_{k=1}^{j-1} p_k(\sum\limits_{l=k+1}^j q_l)\\
            & = & \sum\limits_{k=1}^{j-1} \Big(  (p_k+q_k)(j+k)+ q_k\sum\limits_{l=k+1}^j (p_l + q_l)      \Big).
    \end{aligned}
\end{equation}
\end{thm}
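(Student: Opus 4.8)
The plan is to establish the theorem in three stages: first identify $\dAs^{\ac}$ concretely inside the free cooperad $\cal C(sM_{\dAs})$, then verify the inductive formulas for the basis elements $\mu_{ij}$, and finally derive the cocomposition formula \eqref{F:Delta} by applying \eqref{F:deltacoop}. Since $\dAs$ is quadratic with $\dAs^{\ac}=\cal C(sM_{\dAs},s^2R_{\dAs})$, the cooperad $\dAs^{\ac}$ is by definition the maximal sub-cooperad of $\cal C(sM_{\dAs})$ whose weight-two part is $s^2R_{\dAs}$; the differential is trivial because both $M_{\dAs}$ and $R_{\dAs}$ carry trivial differential. From Proposition~\ref{prop:distributivedual} we already know the underlying $\kfield$-module is free on a family of generators indexed by $(i,j)$ with bidegree $(i,1-i-j)$, namely the images of $(sm_{11})^i\circ\mu_n$ under the isomorphism $\dAs^{\ac}\cong\cal D^{\ac}\circ\As^{\ac}$ of underlying collections. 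What remains is to pin these generators down as explicit trees in $\cal C(sM_{\dAs})$ and to extract $\Delta$.

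For the inductive description, I would first handle the two ``pure'' families. In arity one, $\dAs^{\ac}(1)=\cal D^{\ac}=\kfield[\mu_{11}]$, so $\mu_{i1}$ is the corolla which is an $i$-fold ``vertical stack'' of $sm_{11}$; the relation $\mu_{i1}=\mu_{11}(\mu_{i-1,1})$ is immediate, and one checks $\mu_{11}(\mu_{11})$ indeed lies in the quadratic data coming from $s^2(\kfield m_{11}^2)$ (i.e.\ is \emph{not} killed; the relation is $m_{11}^2$, and in the Koszul dual cooperad this survives rather than dies — this is exactly the dual-numbers phenomenon). For the $\mu_{0n}$, working inside $\cal C(s(\kfield m_{02}))=\As^{\ac}$ is classical: the unique generator $\mu_n$ in arity $n$ satisfies a recursion that, with the sign conventions of \eqref{F:deltacoop}, becomes $\mu_{0n}=\sum_{p+q=n}(-1)^{p(q+1)}\mu_{02}(\mu_{0p},\mu_{0q})$. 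Both of these can be verified by induction on the weight, using that the weight-two component of the defining relation forces the coefficient structure. The mixed elements $\mu_{ij}$ for $i\ge1,j\ge2$ are then \emph{defined} by the stated formula $\mu_{ij}=\mu_{11}(\mu_{i-1,j})+\sum(-1)^{|s\mu_{rs}||\mu_{tw}|+rw}\mu_{02}(\mu_{rs},\mu_{tw})$, and the content is to check (i) these span a sub-$\kfield$-module stable under $\Delta$, (ii) their weight-two truncation lands in $s^2R_{\dAs}$, and (iii) they match the generators produced by Proposition~\ref{prop:distributivedual}, so that the span is all of $\dAs^{\ac}$ by a dimension/rank count in each bidegree.

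The cocomposition formula \eqref{F:Delta} is then obtained by plugging the inductive expression for $\mu_{uv}$ into \eqref{F:deltacoop} and reorganising. Concretely, $\mu_{uv}=\mu_{11}(\mu_{u-1,v})+\sum\mu_{02}(\mu_{rs},\mu_{tw})$, so applying the coderivation-free formula \eqref{F:deltacoop} to each summand and using the already-known $\Delta(\mu_{u-1,v})$ and $\Delta(\mu_{rs})$, $\Delta(\mu_{tw})$, one collapses the double sum into a single sum over decompositions $i+p_1+\cdots+p_j=u$, $q_1+\cdots+q_j=v$. The sign $X\big((p_1,q_1),\dots,(p_j,q_j)\big)$ is the Koszul sign from \eqref{F:deltacoop} specialised to these bidegrees, namely $\sum_{k<l}|s\mu_{p_kq_k}||\mu_{p_lq_l}|+\sum_{k<l}p_kq_l$; the second displayed form in \eqref{F:sign} is just the rewrite using $|s\mu_{pq}|=(p,-(p+q))$ modulo $2$ and bilinearity, which is a bookkeeping identity I would check once. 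The whole argument is best run as a single induction on $u+v$ (or on weight), simultaneously proving the inductive formula for $\mu_{uv}$, its bidegree, and \eqref{F:Delta}.

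\textbf{Main obstacle.} The substantive difficulty is entirely in the signs: reconciling the Koszul-sign output of the generic cooperad formula \eqref{F:deltacoop} — which is phrased in terms of suspensions $s\mu$ — with the two equivalent closed forms in \eqref{F:sign}, and making sure the ad hoc sign $(-1)^{|s\mu_{rs}||\mu_{tw}|+rw}$ in the definition of $\mu_{ij}$ is exactly the one that makes the induction close up to give \eqref{F:Delta}. Getting the $s^2$-shift contributions and the horizontal-versus-vertical degree contributions to match requires care, since the horizontal degree is homological and the vertical degree cohomological, and the suspension only shifts the vertical one; this is where essentially all the real work lies, the rest being a structural induction on weight.
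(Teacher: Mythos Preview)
Your proposal is correct and follows essentially the same route as the paper: an induction on $u+v$ in which the inductive definition of $\mu_{uv}$ is fed into the generic cooperad formula~\eqref{F:deltacoop}, the resulting terms are regrouped using the induction hypothesis for the smaller $\mu$'s, and the entire substance of the argument is the sign computation showing that the Koszul signs from \eqref{F:deltacoop} collapse to the closed form $X$ in~\eqref{F:sign}; the paper then concludes, just as you outline, by noting that $\mu_{03},\mu_{12},\mu_{21}$ realise $s^2R_{\dAs}$ and invoking Proposition~\ref{prop:distributivedual} for the rank match.
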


\begin{proof}

Firstly we are going to show that those inductively defined elements form a sub-cooperad of $\cal F^c(sM_{\dAs})$.
Then we will see that this sub-cooperad contains the quadratic relations $s^2R_{\dAs}$.
Together with Proposition \ref{prop:distributivedual},  this means that it must be $(\dAs)^{\ac}$ itself.

For the first part we have to prove formula (\ref{F:Delta}), which is done by induction on $u+v$.

One has $$\Delta(\mu_{u1})=\sum\limits_{i+p=u} \mu_{i1};\mu_{p1}$$ which is proved by induction from the definition $$\mu_{u1}=\mu_{11}(\mu_{u-1,1}).$$

The case of $\Delta(\mu_{0v})$ is similar to the general case $\Delta(\mu_{uv})$, so we only prove formula (\ref{F:Delta}) for $u\geq 1, v\geq 2$.

We would like to prove that
$$\Delta(\mu_{uv})=\sum (-1)^{X(I)}
\mu_{ij};\mu_I,$$
where the sum is taken over  $i,j, I=((p_1,q_1),\ldots,(p_j,q_j))$ such that $i+\sum_k p_k=u, \sum_k q_k=v$.

\bigskip
\noindent
By formula~(\ref{F:deltacoop}) we have
\begin{equation}\label{F:sumformula}
\Delta(\mu_{uv})=\Delta\left(\mu_{11}(\mu_{u-1,v})+\sum\limits_{r+t=u, s+w=v} (-1)^{|s\mu_{rs}||\mu_{tw}|+rw} \mu_{02}(\mu_{rs},\mu_{tw})\right).
\end{equation}
We will evaluate the summands on the right hand side of the above formula separately using induction together with formula~(\ref{F:deltacoop}).

Assume that we have proved (\ref{F:Delta}) for all $\mu_{kl}$ with $k+l<u+v$. This implies that
\[
\Delta(\mu_{u-1,v})=\sum\limits (-1)^{X(I)} \mu_{i-1,j};\mu_I.
\]
Applying formula~(\ref{F:deltacoop}) allows us to relate this to $\Delta(\mu_{11}(\mu_{u-1,v}))$ with the result that
\[
\Delta(\mu_{11}(\mu_{u-1,v}))=\mu_{01};\mu_{11}(\mu_{u-1,v})+\sum\limits_{}(-1)^0(-1)^{X(I)}\mu_{11}(\mu_{i-1,j});\mu_I.
\]
Thus we have computed the first summand of (\ref{F:sumformula}). As for the second summand, the induction assumption gives us
\[
\Delta(\mu_{rs})=\sum\limits_{}(-1)^{X(I_1)}\mu_{\rho\tau};\mu_{I_1} \,\,\,\,\mbox{and}\,\,\,\,
\Delta(\mu_{tw})=\sum\limits_{}(-1)^{X(I_2)}\mu_{\gamma\delta};\mu_{I_2}
\]
with $I_1=((p_1,q_1),\ldots,(p_\tau q_\tau))$ and $I_2=((p_{\tau+1},q_{\tau+1}),\ldots,(p_j,q_j))$. Putting this in~(\ref{F:deltacoop}) gives
\begin{align*}
    \Delta(\mu_{02}(\mu_{rs},\mu_{tw}))=
            &\sum\limits_{} (-1)^{\sum\limits_{k=1}^\tau|\mu_{p_kq_k}||\mu_{\gamma\delta}|}(-1)^{X(I_1)+X(I_2)}
                      \mu_{02}(\mu_{\rho\tau},\mu_{\gamma\delta});\mu_{I_1}\otimes\mu_{I_2}\\
                      &+\mu_{01};\mu_{02}(\mu_{rs},\mu_{tw}).
\end{align*}

We will feed these computations back into (\ref{F:sumformula}) and work out the signs to obtain the desired (\ref{F:Delta}).
Let $i\geq 1$ and $j\geq 2$.
We are interested in computing the signs in front of elements of the type
$\mu_{11}(\mu_{i-1,j});\mu_I$ and of the type $\mu_{02}(\mu_{\rho\tau},\mu_{\gamma\delta});\mu_I$ where

\[
\begin{array}{rll}
\rho+\gamma &=&i, \\
\tau+\delta&=&j, \\
I&=&((p_1,q_1),\ldots,(p_j,q_j)).
\end{array}
\]

\noindent For the first type the sign is $(-1)^{X(I)}$.
For the second type the sign is of the form $(-1)^Y$ where $Y$ is computed mod 2:

\begin{align*}
 Y&=|s\mu_{rs}||\mu_{tw}|+rw + \sum\limits_{k=1}^{\tau}|\mu_{p_kq_k}||\mu_{\gamma\delta}|+ X(I_1)+X(I_2) \\
&=|s\mu_{rs}||\mu_{tw}|+rw +
\sum\limits_{k=1}^{\tau} |\mu_{p_kq_k}||\mu_{\gamma\delta}|+
\sum\limits_{k=1}^{\tau-1} |s\mu_{p_kq_k}|(\sum\limits_{l=k+1}^\tau |\mu_{p_lq_l}|)+
\sum\limits_{k=1}^{\tau-1} p_k(\sum\limits_{l=k+1}^\tau q_l)\\
&\quad+\sum\limits_{k=\tau+1}^{j-1} |s\mu_{p_kq_k}|(\sum\limits_{l=k+1}^j |\mu_{p_lq_l}|)+
\sum\limits_{k=\tau+1}^{j-1} p_k(\sum\limits_{l=k+1}^j q_l).
\end{align*}

Let us now simplify the sign $Y$.
Using the equalities
\begin{align*}
|\mu_{tw}|&=|\mu_{\gamma\delta}|+\sum\limits_{k=\tau+1}^j |\mu_{p_kq_k}|,
&\rho+\sum\limits_{k=1}^\tau p_k&=r,\\
|\mu_{rs}|&=|\mu_{\rho\tau}|+\sum\limits_{k=1}^\tau |\mu_{p_kq_k}|,
  &\sum\limits_{l=\tau+1}^{j} q_l&=w,\\
\end{align*}
one gets
\begin{align*}
 Y&=X(I)+|s\mu_{rs}||\mu_{tw}|+rw +
\sum\limits_{k=1}^{\tau} |s\mu_{p_kq_k}|(\sum\limits_{l=\tau+1}^j |\mu_{p_lq_l}|)\\
&\qquad\qquad\qquad\qquad\qquad+\sum\limits_{k=1}^{\tau} |\mu_{p_kq_k}|(|\mu_{\gamma\delta}|)+(\sum\limits_{l=1}^\tau p_k)(\sum_{l=\tau+1}^{j} q_l) \\
&=X(I)+|s\mu_{\rho\tau}||\mu_{\gamma\delta}|+(|s\mu_{\rho\tau}|+\tau|s|)(\sum\limits_{k=\tau+1}^j|
\mu_{p_kq_k}|)+\rho w\\
&=X(I)+|s\mu_{\rho\tau}||\mu_{\gamma\delta}|+\rho(\delta-w)+\rho w \\
&=X(I)+|s\mu_{\rho\tau}||\mu_{\gamma\delta}|+\rho\delta.
\end{align*}
Putting this together, we obtain a summand of the form
\[
(-1)^{X(I)} ( \mu_{11}(\mu_{i-1, j});\mu_I+\sum\limits_{\rho+\gamma=i\atop{\tau+\delta=j}}(-1)^{|s\mu_{\rho\tau}||\mu_{\gamma\delta}|+\rho\delta} \mu_{02}(\mu_{\rho\tau},\mu_{\gamma\delta});\mu_I)=(-1)^{X(I)} \mu_{ij};\mu_I,
\]
for $i\geq 1$ and $j\geq 2$.

If $j=1$, we are interested in computing the sign in front of the element of the type
$\mu_{11}(\mu_{i-1,1});\mu_{u-i,v}$ if $i\geq 1$ or in front of $\mu_{01};\mu_{uv}$ if $i=0$.
In the first case one still gets $(-1)^{X(I)}$ with $I=(u-i,v)$ as well as in the second case.

If $i=0$ and $j>1$ we are interested in computing the sign in front of the elements of the type
$\mu_{02}(\mu_{0\tau},\mu_{0\delta});\mu_I$ where
$\tau+\delta=j$
which has already been computed and coincides with the desired sign. Consequently
formula~(\ref{F:Delta}) is proved.

\bigskip
Hence the collection of $\mu_{ij}$'s forms a sub-cooperad of the free cooperad ${\cal F}^c(sM_{\dAs})$. Furthermore
it contains $s^2R_{\dAs}$, since
\[
\begin{array}{rll}
\mu_{03} &=&sm_{02}\circ_1 sm_{02}-sm_{02}\circ_2 sm_{02}, \\
\mu_{12}&=&sm_{11}\circ_1 sm_{02}-sm_{02}\circ_1sm_{11}-sm_{02}\circ_2sm_{11}, \\
\mbox{and}\,\,\, \mu_{21}&=&sm_{11}\circ_1sm_{11}.
\end{array}
\]
We also know that its $\kfield$-module structure coincides with the $\kfield$-module structure of $(d\As)^{\ac}$,
since the $\kfield$-basis elements $\mu_{in}$ are in bijection with the $\nu_{in}$ of
Proposition~\ref{prop:distributivedual}.

As a consequence, the cooperad described is the cooperad $(\dAs)^{\ac}$.
\end{proof}

\begin{cor}\label{cor:infinitesimal}
The infinitesimal cocomposition on $(d\As)^{\ac}$ is given by
\[
\Delta_{(1)}(\mu_{uv})
=\sum\limits_{i+p=u \atop{r+q+t=v, r+1+t=j}}(-1)^{r(1-p-q)+pt}\mu_{ij};1^{\otimes r}\otimes \mu_{pq}\otimes 1^{\otimes t}.
\] \qed
\end{cor}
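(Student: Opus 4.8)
The plan is to read off Corollary~\ref{cor:infinitesimal} directly from Formula~(\ref{F:Delta}) of Theorem~\ref{T:cooperad}. By definition, the infinitesimal cocomposition $\Delta_{(1)}\colon\dAs^{\ac}\to\dAs^{\ac}\circ_{(1)}\dAs^{\ac}$ is the component of the full cocomposition $\Delta$ corresponding to two-level trees with a single vertex on the lower level. Concretely, I would apply $\Delta$ via~(\ref{F:Delta}) and then retain only those summands $\mu_{ij};\mu_{p_1q_1}\otimes\cdots\otimes\mu_{p_jq_j}$ lying in $\dAs^{\ac}\circ_{(1)}\dAs^{\ac}$, that is, those in which all of the factors $\mu_{p_\ell q_\ell}$ but one equal the counit $\mu_{01}=\id$. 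Writing $r+1$ for the position of the surviving factor and renaming it $\mu_{pq}$, so that there are $r$ copies of $\id$ before it and $t=j-1-r$ after it, the index conditions $i+\sum_\ell p_\ell=u$ and $\sum_\ell q_\ell=v$ collapse to $i+p=u$ and $r+q+t=v$ with $j=r+1+t$, which is exactly the index set appearing in the Corollary.

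The only genuine computation is then to evaluate the sign $X\big((0,1),\dots,(p,q),\dots,(0,1)\big)$ of~(\ref{F:sign}) on this particular tuple. I would use the second (already simplified) expression for $X$ in~(\ref{F:sign}), split each of its two sums according to whether the summation index equals $r+1$ or not, and reduce mod~$2$: the $\id$-factors contribute arithmetic sums $\sum k$ that reduce to $\binom{j}{2}$, and each of the two sums in $X$ contributes one such term, so these cancel. What survives is
\[
X\big((0,1),\dots,(p,q),\dots,(0,1)\big)\equiv pt+r(p+q+1)\pmod 2,
\]
and since $p+q+1\equiv 1-p-q\pmod 2$ this is precisely the exponent $r(1-p-q)+pt$ in the Corollary. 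Collecting the retained summands of~(\ref{F:Delta}) with this sign yields the asserted formula for $\Delta_{(1)}(\mu_{uv})$.

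I expect the boundary cases to be the only mild nuisance. When $t=0$ the factor $\mu_{pq}$ sits at the last leaf, so $j=r+1$ and the index $k=r+1$ falls outside the range $\{1,\dots,j-1\}$ of the sums in~(\ref{F:sign}); likewise when $r=0$ the relevant sum is empty. In each such case one must check that the term by which the uniform formula might otherwise have been off is even --- for instance, for $t=0$ one has $j+r+1=2r+2\equiv 0$ --- so that the single closed form $r(1-p-q)+pt$ remains valid throughout. I would carry out these checks but not dwell on them, as no new idea is involved; the substance of the proof is entirely the sign simplification above.
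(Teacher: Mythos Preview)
Your proposal is correct and is precisely the argument the paper has in mind: the Corollary is stated with a bare \qed, so the authors regard it as immediate from Theorem~\ref{T:cooperad}, and you have simply written out the extraction of the $\dAs^{\ac}\circ_{(1)}\dAs^{\ac}$-component of~(\ref{F:Delta}) together with the sign check. One small wording point: your phrase ``two-level trees with a single vertex on the lower level'' is ambiguous---what you mean (and then say correctly) is that exactly one lower factor is non-identity, not that $j=1$.
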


\section{Derived \texorpdfstring{$A_\infty$}{A-infinity}-structures}\label{sec:dAinfty}

In this section we will prove our main result, Theorem~\ref{thm:main}, describing derived $A_\infty$-algebras
as algebras over the operad $(\dAs)_\infty$. Again~\cite{Fresse04} is our main reference for the
cobar construction of a cooperad over a general ground ring.
We will also interpret our description in terms of coderivations and
compare with Sagave's approach.

\subsection{The operad \texorpdfstring{$dA_\infty$}{dA-infinity}}

We would now like to encode derived $A_\infty$-algebras via an operad. Recall from Section~\ref{sec:review} that a derived $A_\infty$-structure on a bigraded module $A$ consists of morphisms
\[
m_{uv}: (A^{\otimes v})^*_* \longrightarrow A^{*+2-u-v}_{*-u}
\]
such that for $u\geq 0, v\geq 1$,
\[
\sum\limits_{\substack{u=i+p, v=j+q-1,\\ j=1+r+t}} (-1)^{rq+t+pj} m_{ij}(1^{\otimes r} \otimes m_{pq} \otimes 1^{\otimes t}) =0.
\]

If one considers $-m_{01}$ as an internal differential of $A$  the  relation reads
\begin{multline*}
(-m_{01})(m_{uv})- (-1)^{u+v} \sum_{r+t+1=v}m_{uv}(1^{\otimes r} \otimes (-m_{01}) \otimes 1^{\otimes t})=\\
(-1)^{u}\sum\limits_{\substack{u=i+p, v=j+q-1\\ j=1+r+t, (i,j)\not=(0,1),(p,q)\not=(0,1)}} (-1)^{rq+t+pj} m_{ij}(1^{\otimes r} \otimes m_{pq} \otimes 1^{\otimes t}).
\end{multline*}

\begin{defn}\label{def:dainfty}
The operad $dA_\infty$  in $\BC_v$  is defined as the free operad
    $$
    \cal F(\kfield m_{uv}:u\geq 0, v\geq 1, (u,v)\neq (0,1)),
    $$
 together with the differential

\[
\partial_\infty(m_{uv})= (-1)^{u}\sum\limits_{\substack{u=i+p, v=j+q-1,\\ j=1+r+t,\\ (i,j),(p,q)\neq(0,1)}} (-1)^{rq+t+pj} m_{ij}(1^{\otimes r} \otimes m_{pq} \otimes 1^{\otimes t}).
\]
\end{defn}

Hence it is easily verified that an algebra over the operad $dA_\infty$ in $\BC_v$ is a derived $A_\infty$-algebra in the above sense.

\bigskip
For a coaugmented cooperad $\cal C$, the \emph{cobar construction} $\Omega(\cal C)$ of $\cal C$ is the operad defined as $\cal F(s^{-1} \overline{\cal C})$, where $\overline{\cal C}$ is the cokernel of the coaugmentation, together with the differential $\partial_\Omega = d_1 + d_2$. Here, $d_2$ is induced by the infinitesimal cocomposition map $\Delta_{(1)}$ of $\cal C$ and $d_1$ is induced by the internal differential of $\cal C$ itself. Note that in our case $\cal C = d\As$, this internal differential is trivial.

We can now state the main result of our paper.
\begin{thm}\label{thm:main}
The operads $(\dAs)_\infty = \Omega((d\As)^{\ac})$ and $dA_\infty$ agree. Hence, a derived $A_\infty$-algebra is a $(\dAs)_\infty$-algebra.
\end{thm}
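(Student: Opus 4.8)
The plan is to compare the two operads generator-by-generator and differential-by-differential. Both $(\dAs)_\infty$ and $dA_\infty$ are free operads, so the comparison reduces to identifying the generating collections together with their differentials. By definition $\Omega(\dAs^{\ac}) = \cal F(s^{-1}\overline{\dAs^{\ac}})$, and by Theorem~\ref{T:cooperad} the reduced Koszul dual cooperad $\overline{\dAs^{\ac}}$ is free as a $\kfield$-module on the basis $\{\mu_{ij} : i\geq 0, j\geq 1, (i,j)\neq (0,1)\}$, with $\mu_{ij}$ of bidegree $(i,1-i-j)$. Applying $s^{-1}$ shifts the vertical degree up by one, so $s^{-1}\mu_{ij}$ has bidegree $(i,2-i-j)$, which matches exactly the prescribed bidegree $(i,2-(i+j))$ of the generator $m_{ij}$ of $dA_\infty$. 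So first I would set up the identification of underlying free operads via $s^{-1}\mu_{ij} \leftrightarrow m_{ij}$, noting that both index sets agree and the bidegrees agree.

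Next I would check that this identification intertwines the differentials. On the $dA_\infty$ side the differential is $\partial_\infty$ as given in the definition. On the cobar side, since the internal differential of $\dAs^{\ac}$ is trivial (Theorem~\ref{T:cooperad}), the cobar differential is $\partial_\Omega = d_2$, which is the (unique) derivation extending the map induced on generators by the infinitesimal cocomposition $\Delta_{(1)}$ of $\dAs^{\ac}$, up to the standard desuspension signs. The key computational input is Corollary~\ref{cor:infinitesimal}, which gives
\[
\Delta_{(1)}(\mu_{uv}) = \sum_{\substack{i+p=u\\ r+q+t=v,\ r+1+t=j}} (-1)^{r(1-p-q)+pt}\, \mu_{ij}; 1^{\otimes r}\otimes\mu_{pq}\otimes 1^{\otimes t}.
\]
So the bulk of the proof is a sign-bookkeeping exercise: translate the right-hand side through the desuspension isomorphism $s^{-1}$ applied to the cogenerators, track the Koszul signs introduced by moving $s^{-1}$ past the $\mu$'s (each of known bidegree), and verify that the resulting coefficient of $s^{-1}\mu_{ij}(1^{\otimes r}\otimes s^{-1}\mu_{pq}\otimes 1^{\otimes t})$ equals $(-1)^{u+1}(-1)^{rq+t+pj}$, matching $\partial_\infty(m_{uv})$. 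One should also confirm that the summand with $(i,j)=(0,1)$ or $(p,q)=(0,1)$ is correctly excluded on both sides — on the cobar side because $\mu_{01}$ is the counit and does not appear in $\overline{\dAs^{\ac}}$, and on the $dA_\infty$ side by the explicit restriction in the definition of $\partial_\infty$.

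The main obstacle I expect is precisely the sign reconciliation between the Koszul-sign conventions used in the cobar construction of Fresse / Loday--Vallette (suspensions $s$, desuspensions $s^{-1}$, the bidegree bookkeeping with $|c||d| = c_1d_1 + c_2d_2$) and Sagave's formula-based sign $(-1)^{rq+t+pj}$ together with the overall factor $(-1)^{u+1}$. The desuspension contributes a sign depending on $r$, the vertical degree of $\mu_{pq}$, and $i$; getting these to collapse to exactly Sagave's expression requires care, using $|\mu_{pq}| = (p, 1-p-q)$ and $|s^{-1}| = (0,-1)$, and the relation $u = i+p$, $v = j+q-1$, $j = 1+r+t$. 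Once the bidegrees are substituted in, the computation is mechanical but error-prone, so I would organize it as a single displayed chain of equalities mod $2$, exactly as in the proof of Theorem~\ref{T:cooperad}. A useful sanity check along the way is to verify the low-arity cases $m_{03}$, $m_{12}$, $m_{21}$ by hand against the explicit expressions for $\mu_{03},\mu_{12},\mu_{21}$ recorded at the end of the proof of Theorem~\ref{T:cooperad}. After the differentials are matched, the second sentence of the theorem — that a $(\dAs)_\infty$-algebra is a derived $A_\infty$-algebra — is immediate, since it was already observed that $dA_\infty$-algebras are exactly derived $A_\infty$-algebras.
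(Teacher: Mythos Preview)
Your proposal is correct and follows essentially the same route as the paper: identify the generators $s^{-1}\mu_{ij}\leftrightarrow m_{ij}$ with matching bidegrees, then plug Corollary~\ref{cor:infinitesimal} into the cobar differential (noting that the internal differential of $\dAs^{\ac}$ is trivial) and reduce the resulting sign $(-1)^{r(1-p-q)+pt+i+j}$ to $(-1)^{u+1}(-1)^{rq+t+pj}$ using $u=i+p$ and $j=1+r+t$. Your additional remarks about the exclusion of the $(0,1)$ terms and the low-arity sanity checks are reasonable embellishments but not needed for the argument.
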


\begin{proof}
By definition, $\Omega((d\As)^{\ac})$ is the free operad on the shift of  $\overline{(d\As)^{\ac}}$. Let us denote its generators by
\[
\rho_{ij}=s^{-1}\mu_{ij}, \qquad\text{for\ }i\geq 0, j\geq 1, i+j\not=1.
\]
The elements $\mu_{ij}$ were described in Theorem \ref{T:cooperad}. The element $\rho_{ij}$ obviously has bidegree $(i,2-i-j)$.

\bigskip
Recall that if $\cal C$ is a coaugmented cooperad then the differential on $\Omega(\cal C)$ is obtained from $\Delta_{(1)}$
as follows.
Assume $$\Delta_{(1)}(c)=\sum c_i;1^{\otimes r}\otimes c_j\otimes 1^{\otimes t},$$
then $$\partial_\Omega (s^{-1} c)=\sum (-1)^{|s^{-1}||c_i|} s^{-1}c_i(1^{\otimes r}\otimes s^{-1}c_j\otimes 1^{\otimes t}).$$

From Corollary \ref{cor:infinitesimal} one gets

\begin{equation}\label{E:useful}
\begin{aligned}
\partial_\Omega(\rho_{uv})&=-\sum\limits_{\substack{u=i+p, v=j+q-1,\\ j=1+r+t,\\ (i,j),(p,q)\neq(0,1)}} (-1)^{r(1-p-q)+pt+i+j} \rho_{ij}(1^{\otimes r} \otimes \rho_{pq} \otimes 1^{\otimes t})\\
&=(-1)^{u}\sum\limits_{\substack{u=i+p, v=j+q-1,\\ j=1+r+t,\\ (i,j),(p,q)\neq(0,1)}} (-1)^{rq+pj+t} \rho_{ij}(1^{\otimes r} \otimes \rho_{pq} \otimes 1^{\otimes t}).
\end{aligned}
\end{equation}
This is the definition \ref{def:dainfty} of the operad $dA_\infty$.
\end{proof}

\bigskip
Recall that a quadratic operad $\cal P$ is \emph{Koszul} if the map of operads
\[
\cal P_\infty := \Omega(\cal P^{\ac}) \longrightarrow \cal P
\]
is a quasi-isomorphism.

\begin{prop}\label{prop:Koszul}
The operad $\dAs$ is Koszul. Thus, $dA_\infty$ is a minimal model of $\dAs$.
\end{prop}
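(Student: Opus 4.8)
The plan is to prove Koszulness of $\dAs$ by exploiting the distributive law decomposition $\dAs \cong \As \vee_\varphi \cal D$ established in Lemma~\ref{lem:distributive}, rather than attempting a direct (and non-geometric) resolution argument. The key input is the general principle that if $\cal A$ and $\cal B$ are Koszul operads and $\varphi\colon \cal B \circ \cal A \to \cal A \circ \cal B$ is a distributive law, then $\cal A \vee_\varphi \cal B$ is Koszul; this is precisely \cite[Theorem 8.6.5 and Corollary 8.6.22]{LodVal10} (Koszulness is detected by the acyclicity of the Koszul complex, which for a distributive law decomposes as a tensor product of the Koszul complexes of the factors). So the proof reduces to checking that the two factors, $\As$ and the dual numbers operad $\cal D$, are Koszul in our category $\BC_v$ over a general commutative ring $\kfield$.

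First I would recall that $\As$ is Koszul: this is the classical statement quoted earlier in the excerpt, and over a general commutative ground ring it is covered by Fresse's treatment \cite{Fresse04}; the Koszul complex of $\As$ is acyclic because the underlying $\kfield$-modules are free and the computation is the same as over a field. Second, I would check that $\cal D = \cal P(\kfield m_{11}, \kfield m_{11}^2)$ is Koszul. Since $\cal D$ is concentrated in arity $1$, it is essentially the algebra $\kfield[x]/(x^2)$ viewed as a one-object operad, and the Koszul dual $\cal D^{\ac} = \kfield[\mu_{11}]$ is the divided-power (here just polynomial, as it is in arity one) cofree object on one generator; the Koszul complex $\cal D^{\ac} \circ_{\kappa} \cal D$ is the standard two-term-per-degree complex computing $\operatorname{Tor}$ over the dual numbers, which is acyclic in positive degrees — again this works over any commutative ring because everything in sight is $\kfield$-free. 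With both factors Koszul and the distributive law of Lemma~\ref{lem:distributive} in hand, \cite[Prop.~8.6.15 and its Koszulness consequence]{LodVal10} gives that $\dAs$ is Koszul, i.e.\ $(\dAs)_\infty = \Omega(\dAs^{\ac}) \to \dAs$ is a quasi-isomorphism of operads in $\BC_v$. By Theorem~\ref{thm:main} this map is $dA_\infty \to \dAs$, so $dA_\infty$ is a minimal model of $\dAs$ (it is minimal since its differential is decomposable, being built from the cocomposition $\Delta_{(1)}$ with no linear part, and the generators form a free collection).

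The main obstacle I anticipate is a bookkeeping one rather than a conceptual one: making sure the distributive-law Koszulness machinery of \cite{LodVal10}, which is usually stated for operads in graded vector spaces over a field, applies verbatim in $\BC_v$ over a general commutative ring. The two things to verify are (i) that all the relevant collections — $\As^{\ac}$, $\cal D^{\ac}$, $\dAs^{\ac}$, and the Koszul complexes — are degreewise $\kfield$-free, so that acyclicity can be checked after base change or directly, and (ii) that the isomorphism of underlying collections $(\As \vee_\varphi \cal D)^{\ac} \cong \cal D^{\ac} \circ \As^{\ac}$ from Proposition~\ref{prop:distributivedual} is compatible with the twisting differentials appearing in the Koszul complexes. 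Point (i) is already secured by Proposition~\ref{prop:distributivedual} and the explicit bases in Theorem~\ref{T:cooperad}; point (ii) follows from the naturality of the constructions in \cite[8.6]{LodVal10} together with Fresse's general-ground-ring framework \cite{Fresse04}. Once these compatibilities are recorded, the acyclicity of the $\dAs$ Koszul complex is immediate from the Künneth-type splitting, since each tensor factor is acyclic and the factors are $\kfield$-free.
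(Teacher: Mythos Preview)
Your approach is essentially the same as the paper's: both deduce Koszulness of $\dAs$ from the distributive law decomposition $\dAs \cong \As \vee_\varphi \cal D$ together with the Koszulness of $\As$ and $\cal D$, invoking the relevant result from \cite[\S 8.6]{LodVal10} (the paper cites Theorem~8.6.11 rather than the specific numbers you quote, but this is the same machinery). Your write-up is more detailed about the general ground ring and $\kfield$-freeness issues, which the paper leaves implicit.
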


\begin{proof}
We know that $\dAs = \cal D \circ \As$ by Proposition~\ref{prop:distributivedual}. The operads $\cal D$ and $\As$ are Koszul.
Using Theorem 8.6.11 of~\cite{LodVal12}, $\dAs$ is Koszul.
\end{proof}

\begin{rem}
If we do not put in the multiplication and consider just the operad $\cal{D}_\infty=\Omega\cal{D}^{\ac}$ in $\BC_v$,
we obtain an operad whose algebras are precisely the
twisted chain complexes. This can be seen either directly as a bigraded version of~\cite[10.3.17]{LodVal12}
or by tracing just the $j=1$ parts of the structure through our results.
\end{rem}

\subsection{Coderivations and Sagave's approach}

We now relate derived $A_\infty$-structures to coderivations. In the classical case, an $A_\infty$-structure on the differential graded $\kfield$-module $A$ is equivalent to a coderivation of degree $+1$ on the reduced tensor coalgebra
\[
d: \overline{\mathcal{T}}^c(sA) \longrightarrow \overline{\mathcal{T}}^c(sA) \,\,\,\,\,\mbox{such that}\,\,\,\,\, d^2=0.
\]
Sagave generalised this viewpoint to derived $A_\infty$-algebras in the following way \cite[Section 4]{Sag10}. A derived $A_\infty$-structure on the bigraded $\kfield$-module $A$ is equivalent to a coderivation of degree $+1$
\[
\xymatrix{ \overline{\mathcal{T}}^c(sA) \ar[rr]^d  \ar[d]_{\Delta} && \overline{\mathcal{T}}^c(sA) \ar[d]_{\Delta} \\
\overline{\mathcal{T}}^c (sA) \otimes \overline{\mathcal{T}}^c(sA)\ar[rr]^{d\otimes 1+1\otimes d} &&\overline{\mathcal{T}}^c(sA) \otimes \overline{\mathcal{T}}^c(sA)\\
}
\]
such that $(\mathcal{T}^c(SA),d)$ is a twisted chain complex, see Definition \ref{def:twistedchaincx}, \cite[Lemma 4.1]{Sag10}. The definition of a differential of a twisted cochain complex differs from the condition $d^2=0$ by signs.

\bigskip
Our approach varies from this. In the setting of associative algebras in dg-$\kfield$-modules, one has
    $$
    \As^{\ac}(A)=\overline{\cal{T}}^c(sA).
    $$
However, $(d\As)^{\ac}(A)$ is \emph{not} given by $\overline{\cal{T}}^c(sA)$ in the derived setting - we showed its structure in Theorem \ref{T:cooperad}.

\bigskip
So in our setting, a derived $A_\infty$-structure on the vertical bicomplex $A$ is given by a coderivation of degree $+1$
\[
\xymatrix{ (d\As)^{\ac}(A) \ar[d]_{\Delta_{(1)}} \ar[rr]^{d} && (d\As)^{\ac}(A) \ar[d]_{\Delta_{(1)}} \\
((d\As)^{\ac} \circ_{(1)} (d\As)^{\ac})(A) \ar[rr]^{d\circ_{(1)}1+1\circ_{(1)} d} && ((d\As)^{\ac} \circ_{(1)} (d\As)^{\ac})(A) \\
}
\]
such that $d^2=0$. Comparing those two equivalent conditions we see the following. Sagave's description has the advantage of a much easier coalgebra structure while the complexity of the derived $A_\infty$-structure is encoded in the more complicated condition that a coderivation has to satisfy. In our description, a coderivation has to satisfy the relatively simple condition $d^2=0$ while the complexity lies in the more complicated coalgebra structure.

\section{Infinity morphisms and an application}\label{sec:infmorphims}

The main purpose of this section is to describe $\infty$-morphisms of $(d\As)_\infty=dA_\infty$-algebras, and to prove that they coincide with the derived $A_\infty$-morphisms defined by Sagave. At the end of the section, we give an application of the homotopy transfer theorem.

\subsection{Infinity morphisms}

Using the language of operads, the natural notion of morphism between two $dA_\infty$-algebras $A$ and $B$ is
a map $f:A\rightarrow B$ respecting the algebra structure. This is the notion of a strict morphism. However, in the context
of $\mathcal P_\infty$-algebras where $\mathcal P$ is a Koszul operad, there is also a
more general notion of $\infty$-morphism, which is more relevant to the homotopy
theory of $P_\infty$-algebras; see, for example,~\cite[Section 10.2]{LodVal12}. In the case of $A_\infty$-algebras, this gives rise to the usual notion of $A_\infty$-morphism
between two $A_\infty$-algebras $A$ and $B$ and this can be formulated as a morphism of differential graded coalgebras
between the bar constructions of $A$ and $B$.

As seen at the end of the previous section, a $dA_\infty$-structure $m$ on the vertical bicomplex $A$ is equivalent to a square-zero coderivation $D_m$ of degree $+1$ on the $(d\As)^{\ac}$-coalgebra $(d\As)^{\ac}(A)$.  This coalgebra corresponds to the bar construction for $A_\infty$-algebras in our framework.
This lends itself to the following definition.

\begin{defn}
Let $(A,m)$ and $(B,m')$ be $dA_\infty$-algebras. An \emph{$\infty$-morphism of $dA_\infty$-algebras} is a morphism
\[
F: ((d\As)^{\ac}(A), D_m) \longrightarrow ((d\As)^{\ac}(B),D_{m'})
\]
of $(d\As)^{\ac}$-coalgebras.
\end{defn}

We will interpret this definition in terms of twisting morphisms, but
first, we give a recollection of some facts based on the book of Loday and Vallette,
adapted to the category of vertical bicomplexes. We will need these as a basis for our computation.

\begin{defn}\label{def:convolution} Let $(\cal C, d_{\cal C})$ be a cooperad and $(\cal P, d_{\cal P})$ an operad in vertical bicomplexes. Following the notation of Section~\ref{SS:verticalbicomplex}, we consider the collection in complexes $\HOM(\cal C,\cal P)$. It is a differential graded operad called the \emph{convolution operad}.

There is an operation $\star$ on $\HOM(\cal C,\cal P)$ defined by
    \[
    f \star g: \cal C \xrightarrow{\Delta_{(1)}} \cal C \circ_{(1)} \cal C \xrightarrow{f \circ_{(1)} g}
        \cal P \circ_{(1)} \cal P         \xrightarrow{\gamma_{(1)}} \cal P,
    \]
where $\Delta_{(1)}$ and $\gamma_{(1)}$ are respectively the infinitesimal cocomposition and composition maps.
As in~\cite[6.4.4]{LodVal12}, this determines the structure of a differential graded pre-Lie algebra
on $\prod_n\HOM(\cal C, \cal P)(n)$.
The associated differential graded Lie algebra is called the \emph{convolution Lie algebra}.
\end{defn}

\begin{defn}
A \emph{twisting morphism} is an element $\alpha$ of degree $1$ in the complex $\HOM(\cal C, \cal P)$ satisfying the Maurer-Cartan equation
\[
\partial(\alpha) + \alpha \star\alpha = 0.
\]
We denote the set of twisting morphisms by $\Tw(\cal C, \cal P)$.
\end{defn}

By construction, the cobar construction $\Omega$ satisfies
\[
\Hom_{\BC_v-op}(\Omega(\cal C), \cal P) \cong \Tw(\cal C, \cal P),
\]
where the left-hand side means morphisms of operads in vertical bicomplexes.
This means that a $dA_\infty$-structure $m$ on the vertical bicomplex $A$, that is,  a square-zero coderivation $D_m$ of degree $+1$ on the $(d\As)^{\ac}$-coalgebra $(d\As)^{\ac}(A)$ as seen  at the end of the previous section, is equivalent to a twisting morphism
\[
\varphi_m \in \Tw( (d\As)^{\ac}, \End_A).
\]

\bigskip
Let $A$ and $B$ be vertical bicomplexes, and let $\End^A_B$, a collection in vertical bicomplexes, be  given by
\[
{\End}^A_B(n)=\Mor(A^{\otimes n}, B).
\]
The vertical differential is given by
\[
\partial(f)= d_B f -(-1)^j \sum\limits_{v=0}^{n-1} f(1^{\otimes v} \otimes d_A \otimes 1^{n-v-1})
\]
for $f$ in arity $n$ and bidegree $(i,j)$.

For $f \in \HOM( (d\As)^{\ac}, \End^A_B)$ and $\varphi \in \HOM((d\As)^{\ac}, \End_A)$, the map $f\ast  \varphi$ is given by the composite
\[
f \ast \varphi: (d\As)^{\ac} \xrightarrow{\Delta_{(1)}} (d\As)^{\ac} \circ_{(1)} (d\As)^{\ac} \xrightarrow{f \circ_{(1)} \varphi} \End^A_B \circ_{(1)} \End_A \xrightarrow{\rho} \End^A_B
\]
where $\rho$ is induced by the composition of maps. Similarly, for $\psi \in \HOM((d\As)^{\ac}, \End_B)$ and $f$ as above, $\psi  \circledast f$ is given by
\[
\psi \circledast f: (d\As)^{\ac} \xrightarrow{\Delta} (d\As)^{\ac} \circ (d\As)^{\ac} \xrightarrow{\psi \circ f} \End_B \circ \End^A_B \xrightarrow{\lambda} \End^A_B
\]
where $\lambda$ is given by composition of maps.

\bigskip
Now let $$\varphi_{m^A} \in \Tw((d\As)^{\ac}, \End_A) \,\,\,\mbox{and}\,\,\, \varphi_{m^B} \in \Tw((d\As)^{\ac}, \End_B)$$ be $dA_\infty$-structures on the vertical bicomplexes $A$ and $B$ respectively. By \cite[Theorem 10.2.6]{LodVal12}, an $\infty$-morphism
\[
F: (d\As)^{\ac}(A) \longrightarrow (d\As)^{\ac}(B)
\]
of $dA_\infty$-algebras is equivalent to an element $f \in \HOM( (d\As)^{\ac}, \End^A_B)$ of degree $0$ such that
\[
f \ast \varphi_{m^A} - \varphi_{m^B} \circledast f = \partial(f).
\]

(note that the vertical bicomplex $(d\As)^{\ac}(n)$ has trivial differential).
Taking this into account we arrive at the following.

\begin{thm}
An $\infty$-morphism $f: A \longrightarrow B$ of $dA_\infty$-algebras is
a morphism of derived $A_\infty$-algebras as defined by Sagave,
that is,
a collection of maps
\[
f_{uv}: A^{\otimes v} \longrightarrow B
\]
of bidegree $(u, 1-u-v)$ satisfying equation ~(\ref{dmapequation}) of Definition~\ref{def:morphism}.
\end{thm}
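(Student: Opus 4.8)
The plan is to unwind the operadic characterization of $\infty$-morphisms supplied just before the statement, namely the equation $f \ast \varphi_{m^A} - \varphi_{m^B} \circledast f = \partial(f)$ in $\HOM(d\As^{\ac}, \End^A_B)$ of degree $0$, and to match it term by term with Sagave's equation~(\ref{dmapequation}). First I would record that an element $f \in \HOM(d\As^{\ac}, \End^A_B)$ of degree $0$ is precisely a family of $\kfield$-linear maps $f_{uv} := f(\mu_{uv}) : A^{\otimes v} \to B$, and check that the degree-$0$ condition together with the bidegree $(u, 1-u-v)$ of the basis element $\mu_{uv}$ (Theorem~\ref{T:cooperad}) forces $f_{uv}$ to have bidegree $(u, 1-u-v)$, as claimed. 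Likewise I would recall that the twisting morphisms $\varphi_{m^A}$, $\varphi_{m^B}$ send $\mu_{pq}$ to $m^A_{pq}$, $m^B_{pq}$ respectively, these being the structure maps of the given $dA_\infty$-algebras.

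Next I would evaluate both sides of the defining equation on a basis element $\mu_{uv}$. The term $\partial(f)(\mu_{uv})$ is computed from the differential on $\End^A_B$ displayed above: since $d\As^{\ac}$ has trivial internal differential, it is $d_B f_{uv} - (-1)^{1-u-v} \sum_w f_{uv}(1^{\otimes w} \otimes d_A \otimes 1^{\otimes v-w-1})$, i.e.\ exactly the $(p,q) = (0,1)$ contributions on the left of~(\ref{dmapequation}) after bookkeeping signs. The term $(f \ast \varphi_{m^A})(\mu_{uv})$ unpacks via the infinitesimal cocomposition $\Delta_{(1)}$ on $d\As^{\ac}$, for which Corollary~\ref{cor:infinitesimal} gives an explicit formula; applying $f \circ_{(1)} \varphi_{m^A}$ and then composition in $\End^A_B$ produces exactly $\sum (-1)^{?} f_{ij}(1^{\otimes r} \otimes m^A_{pq} \otimes 1^{\otimes t})$ over the index set $u = i+p$, $v = j+q-1$, $j = 1+r+t$. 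The term $(\varphi_{m^B} \circledast f)(\mu_{uv})$ instead uses the full cocomposition $\Delta$ on $d\As^{\ac}$, given by formula~(\ref{F:Delta}) with the sign $X((p_1,q_1),\dots,(p_j,q_j))$ of~(\ref{F:sign}); applying $\varphi_{m^B} \circ f$ and composing yields $\sum (-1)^{X} m^B_{ij}(f_{p_1q_1} \otimes \cdots \otimes f_{p_jq_j})$ over $u = i + p_1 + \cdots + p_j$, $v = q_1 + \cdots + q_j$. Assembling these three computations and moving the $\partial(f)$ term and the $(p,q)=(0,1)$ terms appropriately recovers~(\ref{dmapequation}).

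The one genuinely non-trivial point — and the step I expect to be the main obstacle — is the precise reconciliation of signs. There are several sources of sign: the Koszul sign rule in $\BC_v$, the sign $(-1)^{|s^{-1}||s^{-1}c_i|}$-type factors that appear whenever one desuspends or composes in $\HOM(\cal C, \cal P)$, the sign $r(1-p-q)+pt$ from Corollary~\ref{cor:infinitesimal} (versus the $rq+t+pj$ appearing in Sagave's equation, which already required a rewriting in~(\ref{E:useful})), and the sign $X$ of~(\ref{F:sign}), which must be shown to equal Sagave's $\sigma$ modulo $2$. For the left-hand side the relevant identity is essentially the one already verified in the proof of Theorem~\ref{thm:main}, where $r(1-p-q)+pt+i+j \equiv (u+1) + rq+pj+t \pmod 2$; for the right-hand side one checks directly that $X((p_1,q_1),\dots,(p_j,q_j)) = \sum_{k=1}^{j-1}\big((p_k+q_k)(j+k) + q_k\sum_{l>k}(p_l+q_l)\big)$ agrees mod $2$ with $\sigma = u + \sum_{k=1}^{j-1}(p_k+q_k)(j+k) + q_k(\sum_{s>k}p_s+q_s)$, the discrepancy being the global $u$, which is absorbed by the overall sign arising from desuspension on the basis element $\mu_{uv}$ of bidegree $(u,1-u-v)$. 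So the argument is: write out the three terms, substitute the explicit cocomposition formulas, and carry out a careful but routine mod-$2$ sign comparison; everything else is formal from~\cite[Theorem 10.2.6]{LodVal10}.

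Finally, I would note the converse direction is automatic: the correspondence $f \leftrightarrow (f_{uv})_{u\geq 0, v\geq 1}$ is a bijection, the equation $f\ast\varphi_{m^A}-\varphi_{m^B}\circledast f=\partial(f)$ holds in $\HOM(d\As^{\ac},\End^A_B)$ if and only if it holds after evaluation on every basis element $\mu_{uv}$, and we have just identified that family of equations with~(\ref{dmapequation}); hence $\infty$-morphisms in the operadic sense are exactly Sagave's morphisms of derived $A_\infty$-algebras. (One should also remark that composition of $\infty$-morphisms, being composition of $d\As^{\ac}$-coalgebra maps, matches Sagave's composition via the reduced tensor algebra reformulation, though this is not strictly required by the statement.)
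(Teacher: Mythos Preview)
Your proposal is correct and follows essentially the same approach as the paper's proof: evaluate the equation $f \ast \varphi_{m^A} - \varphi_{m^B} \circledast f = \partial(f)$ on each basis element $\mu_{uv}$, using Corollary~\ref{cor:infinitesimal} for the $\ast$-term and formula~(\ref{F:Delta}) for the $\circledast$-term, then reconcile signs. One small imprecision: the extra global $(-1)^u$ that reconciles $X$ with $\sigma$ does not come from any desuspension (there is none here), but simply from the identity $\sigma \equiv u + X \pmod 2$, with the matching $(-1)^u$ on the left arising from the Koszul sign $|\varphi_{m^A}|\cdot|\mu_{ij}|$ in the $\ast$-computation; once you carry this out explicitly it cancels on both sides, exactly as in the paper.
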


\begin{proof}
Assume that $f: (d\As)^{\ac} \longrightarrow \End^A_B$ satisfies
\[
f \ast \varphi_{m^A} - \varphi_{m^B} \circledast f = \partial(f).
\]
We know the structure of $(d\As)^{\ac}$ from Theorem \ref{T:cooperad}. The underlying $\kfield$-module of $(d\As)^{\ac}$ is free on generators $\mu_{uv}$ of bidegree $(u,1-u-v)$. Write
\[
f_{uv} := f(\mu_{uv})
\]
and recall that $\varphi_{m^A}(\mu_{ij})=m^A_{ij}$ and $\varphi_{m^B}(\mu_{ij})=m^B_{ij}$.

Using the formulas given by Theorem \ref{T:cooperad}, Corollary \ref{cor:infinitesimal} and because $\varphi_{m^A}$ is of bidegree $(0,1)$ we obtain
\begin{align*}
(f \ast \varphi_{m^A})(\mu_{uv})=&\sum\limits_{u=i+p \atop{v=j+q-1, j=r+t+1}} (-1)^{r(1-p-q)+pt+1+i+j} f_{ij}(1^{\otimes r} \otimes m^A_{pq} \otimes 1^{\otimes t})\\
=&\sum\limits_{u=i+p \atop{v=j+q-1, j=r+t+1}}(-1)^{rq+pj+t+u} f_{ij}(1^{\otimes r} \otimes m^A_{pq} \otimes 1^{\otimes t})
\end{align*}
and
\[
(\varphi_{m^B} \circledast f)(\mu_{uv})=\sum\limits_{} (-1)^X m_{ij}^B(f_{p_1q_1} \otimes \cdots \otimes f_{p_j q_j})
\]
where
\[
X=X((p_1,q_1),...,(p_j,q_j)) = \sum\limits_{k=1}^{j-1} \Big(  (p_k+q_k)(j+k)+ q_k\sum\limits_{l=k+1}^j (p_l + q_l)\Big).
\]
Also,
\[
\partial_{End}(f)(\mu_{uv})= d_B f_{uv} -(-1)^{1+u+v} \sum\limits_{l=0}^{v-1} f_{uv}(1^{\otimes l} \otimes d_A \otimes 1^{v-l-1}).
\]
With $d_A=m_{01}^A$ and $d_B=m_{01}^B$, this equals
\[
\partial_{End}(f)(\mu_{uv})= m_{01}^B(f_{uv}) -(-1)^{1+u+v} \sum\limits_{l=0}^{v-1} f_{uv}(1^{\otimes l} \otimes m^A_{01} \otimes 1^{v-l-1}).
\]
Putting this together, we arrive at
\[
(-1)^u\sum\limits_{u=i+p \atop{v=j+q-1, 1+r+t=j}}(-1)^{rq+t+pj} f_{ij}(1^{\otimes r} \otimes m^A_{pq} \otimes 1^{\otimes t}) = \sum\limits_{} (-1)^u(-1)^\sigma m_{ij}^B(f_{p_1q_1} \otimes \cdots \otimes f_{p_j q_j})
\]
which is exactly formula~(\ref{dmapequation}) of Sagave's definition.
\end{proof}

\subsection{The homotopy transfer theorem for \texorpdfstring{$\dAs$}{dAs} }

As an immediate application of our operadic description, we can apply the homotopy
transfer theorem; see~\cite[Section 10.3]{LodVal12}. To do so, we will need to now work over a ground field. Although
this takes us out of the context which motivated the introduction of
derived $A_\infty$-algebras, it nonetheless gives us a new family of examples.
\smallskip

Let $\mathcal{P}$ be a Koszul operad, $W$ a $\mathcal{P}_\infty$-algebra and $V$ a homotopy retract of $W$. Recall
that a $\mathcal{P}_\infty$-structure on $W$ is equivalent to an element
$\varphi \in \Tw(\mathcal{P}^{\ac}, \End_W)$. The homotopy transfer
theorem~\cite[Theorem 10.3.6]{LodVal12} says that the homotopy retract
$V$ can be given a $\mathcal{P}_\infty$-structure by the twisting morphism given by the following composite
\[
\mathcal{P}^{\ac} \xrightarrow{\Delta} \mathcal{F}^c(\bar{\mathcal{P}}^{\ac}) \xrightarrow{\mathcal{F}^c(s\varphi) }
 \mathcal{F}^c(s\End_W) \xrightarrow{\Psi} \End_V.
\]
(The map $\Delta$ is the coproduct map defined in \cite[5.8.12]{LodVal12}.)
Moreover there is a standard way to interpret this formula in terms of the combinatorics
of trees.
\medskip

We adopt the usual notation for this setting: we have the inclusion $i:V\to W$ and projection $p:W\to V$
such that $pi$ is the identity on $V$,
and a homotopy $h:W\to W$ between $ip$ and the identity on $W$, $1_W- ip = d_Wh + hd_W$.

As a special case, we consider $\mathcal{P}=\dAs$ and we let $V=A$ be a bidga over a field.
The vertical homology $W=H^v(A)$ of $A$ is a homotopy retract and
we therefore obtain a derived $A_\infty$-algebra structure on this. Write $d_h=m_{11}$
for the horizontal differential and $m=m_{02}$ for the multiplication.
Making the transferred structure explicit for this special case yields the following.

\begin{prop}
There is a derived $A_\infty$-algebra structure on the vertical homology $H^v(A)$ of a bidga $A$
over a field, which can be described as follows.
We obtain $m_{ij}$ as a (suitably signed) sum over the maps corresponding to planar trees with $j$ leaves,
where each vertex has been assigned a weight of either $2$ or $3$, and the number of vertices of weight $2$ is $i$.
The procedure for assigning a map to such a tree is as follows.
We adorn the trees with the map $i$ on the leaves, the map $p$ at the root and the map $h$ on
internal edges. On vertices, we put the multiplication $m$ at every vertex of weight $3$
and the horizontal differential $d_h$ at every vertex of weight $2$. \qed
\end{prop}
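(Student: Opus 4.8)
The plan is to specialise the homotopy transfer theorem, as stated just above in the tree-combinatorial form of~\cite[Section 10.3]{LodVal10}, to the operad $\mathcal P=\dAs$ and to read off the resulting formula using our explicit knowledge of $\dAs^{\ac}$ from Theorem~\ref{T:cooperad}. First I would recall that the transferred $\mathcal P_\infty$-structure is the twisting morphism
\[
\mathcal{P}^{\ac} \xrightarrow{\Delta} \mathcal{T}^c(\bar{\mathcal{P}}^{\ac}) \xrightarrow{\mathcal{T}^c(s\varphi)}
 \mathcal{T}^c(s\End_W) \xrightarrow{\Psi} \End_V,
\]
and that the standard tree interpretation evaluates this on a cogenerator by summing, over all ways of writing that cogenerator as an iterated cocomposite, the map obtained by decorating the corresponding tree: $i$ on leaves, $p$ at the root, $h$ on internal edges, and the image under $\varphi$ of the vertex label at each vertex. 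Since $A$ is a bidga, $\varphi=\varphi_{m^A}$ sends $\mu_{02}\mapsto m$ and $\mu_{11}\mapsto d_h$, and vanishes on all other $\mu_{uv}$. The key structural input is therefore: which planar trees with vertices labelled by a basis of $s^{-1}\overline{\dAs^{\ac}}$ actually contribute, and with what arity and degree bookkeeping.

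The main computational step is to trace through Theorem~\ref{T:cooperad}. There the basis element $\mu_{ij}$ of $\dAs^{\ac}$ is built inductively from $\mu_{02}=sm_{02}$ and $\mu_{11}=sm_{11}$ by the operations $\mu_{11}(-)$ and $\mu_{02}(-,-)$, so that, regarded inside $\mathcal{C}(sM_{\dAs})$, $\mu_{ij}$ is a signed sum of planar trees each of whose vertices is binary (labelled $\mu_{02}$) or unary (labelled $\mu_{11}$). Iterating the cocomposition $\Delta$ refines such a tree into a tree-of-trees: the ``outer'' vertices carry the structure maps $m_{ij}$ we are constructing, and the full fringe is again a planar tree built from $\mu_{02}$- and $\mu_{11}$-vertices, now decorated by $\varphi$. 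Since $\varphi$ only sees $\mu_{02}$ and $\mu_{11}$, only those fully-refined trees survive, i.e.\ planar trees in which every vertex has degree $2$ (a $d_h$, coming from $\mu_{11}$) or degree $3$ (an $m$, coming from $\mu_{02}$). Counting: a tree with $j$ leaves and contributing to $m_{ij}$ must, by the bidegree formula $|\mu_{ij}|=(i,1-i-j)$ and additivity of bidegrees along the tree (using $|\mu_{11}|=(1,-1)$, $|\mu_{02}|=(0,-1)$, $|s|=(0,-1)$, $|h|=(0,-1)$, and $|i|=|p|=(0,0)$), have exactly $i$ binary vertices; the horizontal degree $i$ is literally the number of $d_h$'s and the vertical-degree tally then forces the count of $m$'s and hence of leaves. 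Finally I would invoke the standard rule that internal edges of the refined tree get the homotopy $h$, leaves get $i$, and the root gets $p$, to obtain the stated recipe, the sign being the ``suitably signed'' Koszul sign dictated by $\Psi$ and $\mathcal{T}^c(s\varphi)$ together with the sign $X$ of~\eqref{F:sign}.

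I expect the main obstacle to be bookkeeping rather than anything conceptual: one must check carefully that the refinement of the $\mu_{ij}$-trees under iterated $\Delta$ produces \emph{all} binary-or-unary planar trees with the right vertex counts, with no collapsing or cancellation among the surviving terms, and that no trees with a unary ($\mu_{11}$) vertex sitting at the root or directly above a leaf in the ``outer'' layer get double-counted against the twisted-chain-complex part $m_{i1}$; tracking this cleanly is most easily done by noting, as in the Remark after Proposition~\ref{prop:Koszul}, that the $j=1$ part of everything is exactly $\mathcal{D}_\infty$ and splitting off that piece. The precise determination of the sign is the other fiddly point; since the proposition only claims the structure ``up to a suitable sign'', I would be content to say the sign is inherited from~\eqref{F:sign} and the Koszul sign rule on $\Psi\circ\mathcal{T}^c(s\varphi)$, rather than writing it out, and refer to~\cite[Section 10.3]{LodVal10} for the general shape of the tree signs. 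Hence the proposition follows.
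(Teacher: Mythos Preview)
Your proposal is correct and follows essentially the same approach as the paper: the paper's proof is just the \qed, relying on the paragraphs immediately before the proposition, which set up the homotopy transfer theorem in its tree-combinatorial form and note that specialising to $\mathcal{P}=\dAs$ with $\varphi(\mu_{02})=m$, $\varphi(\mu_{11})=d_h$ yields the stated description. You spell out more of the bookkeeping (in particular the vertex-count argument via bidegrees from Theorem~\ref{T:cooperad}) than the paper does, but the underlying argument is the same.
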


This construction specializes to the $A_\infty$-case which involves binary trees with no vertices of degree $2$.
That is, we recover the expected $A_\infty$-algebra structure on the part concentrated in degrees $(0,j)$;
see~\cite[9.4.4, 10.3.8]{LodVal12}.

The signs can be calculated recursively from
the explicit signs appearing in the formula~(\ref{F:Delta}) for $\Delta$.

\section{Operadic and Hochschild cohomology}\label{sec:HH}

In this section, we compute the tangent complex of a derived $A_\infty$-algebra $A$, define the Hochschild cohomology
of $A$ and make the link with the formality theorem of~\cite{RoiWhi11}. Hochschild cohomology has previously only
been defined, in~\cite{RoiWhi11}, for a special class of derived $A_\infty$-algebras, the ``orthogonal'' ones.

Given a vertical bicomplex $A$,  the trigraded $\kfield$-module $C_*^{*,*}(A,A)$ is defined by
    \[
    C_k^{n,i}(A,A)=\Mor(A^{\otimes n},A)_k^i.
    \]
We will describe a graded Lie structure on $CH^{*+1}(A,A)$, where the grading is the total grading
    \[
    CH^N(A,A)=\prod\limits_{n\geq 1}\ \prod\limits_{k,j| k+j+n=N} C_k^{n,j}(A,A),
    \]
that is, an element in $ C_k^{n,j}(A,A)$ has {\it total degree} $j+k+n$.

\subsection{Lie structures}

Let us make explicit Definition~\ref{def:convolution} for the differential graded pre-Lie structure on
$\prod_n\HOM((d\As)^{\ac}, \End_A)(n)$. From Corollary \ref{cor:infinitesimal}, knowing the infinitesimal cocomposition on $(d\As)^{\ac}$,
the $\star$ operation on $\HOM((d\As)^{\ac}, \End_A)$ is given by
\begin{equation}\label{eq:prelie}
(f \star g)(\mu_{uv})= \sum\limits_{j=1+r+t, u=i+p, v=r+q+t} (-1)^{r(1+p+q)+pt+|g||\mu_{ij}|} f(\mu_{ij})(1^{\otimes r} \otimes g(\mu_{pq}) \otimes 1^{\otimes t}),
\end{equation}
where $|g|$ denotes the vertical grading.
\smallskip

For every $N$, there is a bijection
\[
\Phi=\prod_n\Phi_n: \prod_n\HOM((d\As)^{\ac}, \End_A)(n)^N \longrightarrow \prod_n\prod\limits_{u}C^{n,N+1-n-u}_u(A,A)
\]
where $\Phi_n: \HOM((d\As)^{\ac}, \End_A)(n)^N \longrightarrow \prod\limits_{u}C^{n,N+1-n-u}_u(A,A)$
is given by evaluation:
\[
\Phi_n(f_n) = \prod\limits_{u}f_n(\mu_{un}).
\]
The unique preimage of a family $(G_n)_n$, where $G_n=(G_u^{n,N+1-n-u})_u$, is given by the family $g=(g_n)_n=(\Phi_n^{-1}(G_n))_n$ in degree $N$ defined via
    \[
    g_n(\mu_{un}) = G_u^{n,N+1-n-u}.
    \]
We can now transport the pre-Lie structure on $\prod_n\HOM((d\As)^{\ac}, \End_A)(n)$ to $CH^{*+1}(A,A)$ as follows:
let $F=(F_n)_{n\geq 1}$ be of total degree $N+1$ and let
$G=(G_m) _{m\geq 1}$ be of total degree $M+1$. There are unique families $f=(f_n)_n, g=(g_m)_m$ of degree $N$ and $M$ respectively such that
$F=\Phi(f)$ and $G=\Phi(g)$. Then
    \[
    F \star G := \Phi(f\star g).
    \]
Note that the total degree of $F\star G$ is $N+M+1$.
Hence the pre-Lie product decreases the total degree by one. That is, this pre-Lie product endows $CH^{*+1}(A,A)$ with the structure of a graded pre-Lie algebra.

Naturally, this gives rise to a graded Lie algebra structure on $CH^{*+1}(A,A)$ via
\[
[F,G] = F\star G - (-1)^{(N+1)(M+1)} G \star F.
\]

\bigskip
Let us now compare the pre-Lie structure above with the  pre-Lie structure on $C^{*,*}_*(A,A)$ built in \cite{RoiWhi11}. Let
$\mathfrak{f} \in C^{n,i}_k(A,A)$ and $\mathfrak{g} \in C^{m,j}_l(A,A)$. Then
\[
\mathfrak{f}=f_n(\mu_{kn})\,\,\,\mbox{with}\,\,\, |f_n|=n+i+k-1
\]
and
\[
\mathfrak{g}=g_m(\mu_{lm})\,\,\,\mbox{with}\,\,\, |g_m|=m+j+l-1.
\]
Putting this into formula (\ref{eq:prelie}) yields
\[
\mathfrak{f} \star \mathfrak{g}= \sum\limits_{r=0}^{n-1} (-1)^{(n+1)(m+1)+r(m+1)+j(n+1)+k(m+j+l+1)} \mathfrak{f}(1^{\otimes r} \otimes \mathfrak{g} \otimes 1^{\otimes n-r-1}) \in C^{n+m-1,i+j}_{k+l}.
\]
Hence we can see that the sign in this formula differs from the sign in the other pre-Lie algebra structure $\mathfrak{f} \circ_{RW} \mathfrak{g}$
given in~\cite[Definition 2.11]{RoiWhi11} by the sign $(-1)^{k(m+j+l+1)}$.
\bigskip

We can read off the following.
\begin{lem}\label{lem:squarezero}
Let $m \in CH^2(A,A)$. Then $m$ defines a $dA_\infty$-structure on $A$ if and only if $m \star m=0$. \qed
\end{lem}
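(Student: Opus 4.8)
The plan is to unwind the definitions so that the single equation $m\star m=0$ becomes, componentwise, exactly Sagave's family of relations~(\ref{dobjectequation}). Recall that $m\in CH^2(A,A)$ corresponds, under the bijection $\Phi$, to an element $\varphi_m=(\varphi_n)_n\in\prod_n\HOM(d\As^{\ac},\End_A)(n)$ of degree $1$, with $\varphi_n(\mu_{un})=m_{un}\colon A^{\otimes n}\to A$ of bidegree $(u,2-u-n)$; this is precisely the bidegree forced by the requirement that $\varphi_m$ be a degree-$1$ map out of the generator $\mu_{un}$ of bidegree $(u,1-u-n)$. So the statement to prove is that $\varphi_m\star\varphi_m=0$ in the convolution pre-Lie algebra if and only if each $m_{un}$ satisfies~(\ref{dobjectequation}).

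First I would record that, by the discussion just before Definition~\ref{def:convolution} and the identity $\Hom_{\BC_v\text{-}op}(\Omega(d\As^{\ac}),\End_A)\cong\Tw(d\As^{\ac},\End_A)$, a $dA_\infty$-structure on $A$ in the operadic sense (Theorem~\ref{thm:main}) is the same as a twisting morphism $\varphi_m\in\Tw(d\As^{\ac},\End_A)$, i.e.\ a degree-$1$ element satisfying the Maurer--Cartan equation $\partial(\varphi_m)+\varphi_m\star\varphi_m=0$. Since each $d\As^{\ac}(n)$ has trivial vertical differential (Theorem~\ref{T:cooperad}) and $\End_A$ carries the endomorphism differential coming only from $d_A=m_{01}$, the term $\partial(\varphi_m)$ is the part of the Maurer--Cartan expression involving $m_{01}$, while $\varphi_m\star\varphi_m$ collects the remaining quadratic terms. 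In fact, if one does \emph{not} single out $m_{01}$ as an internal differential but keeps it on the same footing as the other $m_{ij}$ — which is exactly what the total Lie bracket on $CH^{*+1}(A,A)$ does — the whole Maurer--Cartan equation is absorbed into the one equation $\varphi_m\star\varphi_m=0$. This is the conceptual content of the lemma.

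Concretely, I would evaluate $\varphi_m\star\varphi_m$ on a basis element $\mu_{uv}$ using formula~(\ref{eq:prelie}) with $f=g=\varphi_m$, so that $|g|$ is the vertical degree of $\varphi_m$, namely $1$, and $|\mu_{ij}|=(i,1-i-j)$ gives $|g||\mu_{ij}|=1\cdot i=i$ in the sign exponent modulo $2$... more precisely the sign $(-1)^{r(1+p+q)+pt+|g||\mu_{ij}|}$ becomes $(-1)^{r(1+p+q)+pt+i}$. One then checks that, modulo $2$, $r(1+p+q)+pt+i$ agrees with the exponent $rq+t+pj$ appearing in~(\ref{dobjectequation}) — this is the same sign reconciliation already carried out in the proof of Theorem~\ref{thm:main} (equation~(\ref{E:useful})) and in the $\infty$-morphism theorem, using $j=1+r+t$ so that $t\equiv j+1+r$ and $v=r+q+t$. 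After this identification, $(\varphi_m\star\varphi_m)(\mu_{uv})=0$ reads exactly as
\[
\sum_{\substack{u=i+p,\ v=j+q-1\\ j=1+r+t}} (-1)^{rq+t+pj}\, m_{ij}(1^{\otimes r}\otimes m_{pq}\otimes 1^{\otimes t})=0,
\]
which is~(\ref{dobjectequation}). Since $\Phi$ is a bijection and $\star$ on $CH^{*+1}(A,A)$ is defined by transport of structure ($F\star G:=\Phi(f\star g)$), the equation $m\star m=0$ in $CH^2(A,A)$ is equivalent to $\varphi_m\star\varphi_m=0$ in $\HOM(d\As^{\ac},\End_A)$, hence to the family~(\ref{dobjectequation}), which is the definition of a $dA_\infty$-structure.

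The only real obstacle is the bookkeeping of signs: one must verify that the sign $(-1)^{r(1+p+q)+pt+i}$ produced by the convolution pre-Lie product matches Sagave's $(-1)^{rq+t+pj}$ modulo $2$ under the constraints $j=1+r+t$, $u=i+p$, $v=j+q-1$. This is a routine mod-$2$ computation essentially identical to ones already done in the paper, so no new difficulty arises; everything else is a direct unravelling of the bijection $\Phi$ and the definition of the transported pre-Lie structure. Hence the lemma follows.
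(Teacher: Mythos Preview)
Your approach is exactly what the paper intends: the lemma is stated there with a bare \qed after ``We can read off the following,'' so the paper's proof is nothing more than the observation that formula~(\ref{eq:prelie}), transported via $\Phi$, reproduces the $dA_\infty$ relations. You have simply made this explicit.

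One small correction in your sign bookkeeping: you write $|g||\mu_{ij}|=1\cdot i=i$, but recall that elements of $\HOM(d\As^{\ac},\End_A)$ have horizontal degree~$0$ and vertical degree~$|g|$, while $\mu_{ij}$ has bidegree $(i,1-i-j)$; with the paper's convention $|c||d|=c_1d_1+c_2d_2$ this gives $|g||\mu_{ij}|=|g|(1-i-j)$, so for $|g|=1$ the exponent is $1-i-j\equiv 1+i+j$, not $i$. With this correction the exponent in~(\ref{eq:prelie}) becomes $r(1+p+q)+pt+1+i+j$, which differs from the exponent $r(1-p-q)+pt+i+j$ in~(\ref{E:useful}) only by a global $(-1)$, and the computation there then shows it equals $rq+pj+t+u$ modulo~$2$. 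Hence $(m\star m)(\mu_{uv})=(-1)^u$ times the left-hand side of~(\ref{dobjectequation}), and the equivalence follows. Your version of the exponent, $r(1+p+q)+pt+i$, differs from $rq+t+pj$ by $r+t+u$, which is not constant in the summation, so that particular claim would not go through as stated; but once the value of $|g||\mu_{ij}|$ is corrected everything is fine and the rest of your argument stands.
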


\subsection{Hochschild cohomology}

We now use this new Lie structure to define another notion of Hochschild cohomology of
derived $A_\infty$-algebras. This definition differs from that
constructed in~\cite{RoiWhi11} by the different signs in the Lie structure, as
explained above. It has the advantage that it applies to all $dA_\infty$-algebras
rather than just the ``orthogonal'' ones.

\begin{defn}\label{def:hochschild}
Let $(A,m)$ be a $dA_\infty$-algebra. Then the \emph{Hochschild cohomology of $A$} is defined as
\[
\hh^*(A,A) := \h^*(CH(A,A), [m,-] ).
\]
\end{defn}

The morphism
\[
[m,-]: CH^*(A,A) \longrightarrow CH^*(A,A)
\]
is indeed a differential. Since $m$ has total degree 2 and
$[-,-]$ has total degree $-1$,
it raises degree by 1. By~\cite[Lemma 1.10]{Liv11} (with respect to the pre-Lie product $\circ$), one has
$[m,[m,-]]= [m\star m,-]$,
and the right-hand side vanishes because of Lemma \ref{lem:squarezero}.

In the case of $(A,m)$ being an associative algebra, this definition recovers the classical definition of Hochschild cohomology of associative algebras.

\begin{rem}
 Because of the bijection $\Phi$ the complex computing the Hochschild cohomology of $A$ coincides
 with the operadic cohomology. Recall that given a
$\cal P$-algebra $A$, its operadic cohomology with coefficients in itself is
$H^*(\HOM(\cal P^{\ac}(A),A),\partial_\pi)$ where $\pi$ depends on the twisting cochain defining the structure on $A$.

As an example,
when $A$ is a bidga with $m=m_{11}+m_{02}$, i.e. if $A$ is a bidga with trivial horizontal differential, the external grading is preserved by both bracketing with $m_{11}$ and $m_{02}$. Hence we can, as in~\cite[Section 3.1]{RoiWhi11}, consider bigraded Hochschild cohomology
\begin{equation}
\hh^{s,r}(A,A) = \h^s(\prod\limits_n C^{n,r}_{*-n}(A,A), [m,-]). \nonumber
\end{equation}
We denote this special case by $\hh^{*,*}_{bidga}(A,A)$. It corresponds to the operadic cohomology with respect to the operad $\dAs$.

When $\cal P$ is a Koszul operad, given a $\cal P_\infty$-algebra, one can still define its operadic cohomology as the homology of the complex
\begin{equation}\label{F:hhcomplex}
(\HOM(\cal P^{\ac}(A),A),\partial_\pi),
\end{equation}
where $\pi$ represents the twisting cochain associated to the $\cal P_\infty$-structure on $A$.

If $A$ is a derived $A_\infty$-algebra, the complex (\ref{F:hhcomplex}) is exactly the complex of Definition~\ref{def:hochschild}. That is, operadic cohomology for derived $A_\infty$-algebras is Hochschild cohomology as defined at the beginning of the subsection.

Note however, that in order to identify this cohomology theory with the Andr\'e-Quillen cohomology of derived $A_\infty$-algebras as in~\cite[Proposition 12.4.11]{LodVal12} one needs to assume that $A$ is bounded below for the vertical grading and is free as a $\kfield$-module.
\end{rem}

This more compact definition of Hochschild cohomology has some structural advantage over $\hh^*_{RW}$, the Hochschild cohomology defined in \cite{RoiWhi11}.
In particular, we see that the Lie bracket $[-,-]$ on $CH^*(A,A)$ induces a Lie bracket on $$\hh^*(A,A)=\h^*(CH^*(A,A),D=[m,-]).$$ This is the case because $D$ is an inner derivation with respect to $[-,-]$ due to the graded Jacobi identity. Hence, the bracket of two cycles is again a cycle, and the bracket of a boundary and a cycle is a boundary.

\begin{prop}
The (shifted) Hochschild cohomology of a $dA_\infty$-algebra $\hh^{*+1}(A,A)$ has the structure of a graded Lie algebra. \qed
\end{prop}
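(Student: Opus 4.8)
The plan is to verify that the graded Lie bracket $[-,-]$ on $CH^{*+1}(A,A)$ descends to the cohomology $\hh^{*+1}(A,A) = \h^*(CH^{*+1}(A,A), D)$ where $D = [m,-]$. The key point, as already noted in the text preceding the statement, is that $D$ is an inner derivation of the graded Lie algebra $(CH^{*+1}(A,A), [-,-])$, so that the bracket of cycles is a cycle and the bracket of a cycle with a boundary is a boundary.

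First I would recall from the discussion above that $CH^{*+1}(A,A)$ carries a graded pre-Lie product $\star$ (transported via the bijection $\Phi$ from the convolution pre-Lie structure on $\prod_n \HOM(d\As^{\ac}, \End_A)(n)$, see Definition~\ref{def:convolution}), and hence a graded Lie bracket $[F,G] = F\star G - (-1)^{(N+1)(M+1)} G\star F$ for $F$ of total degree $N+1$ and $G$ of total degree $M+1$. By Lemma~\ref{lem:squarezero}, the element $m \in CH^2(A,A)$ satisfies $m\star m = 0$, and we set $D = [m,-]$. As noted, $D^2 = [m,[m,-]] = [m\star m, -] = 0$ (using \cite[Lemma 1.10]{Liv11}), so $D$ is a differential of total degree $+1$ on $CH^{*+1}(A,A)$, and $\hh^{*+1}(A,A)$ is its cohomology.

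Next I would invoke the graded Jacobi identity for $[-,-]$ to show that $D = [m,-]$ is a graded derivation of the bracket: for homogeneous $F, G$,
\[
D[F,G] = [DF, G] \pm [F, DG],
\]
with the sign determined by the total degree of $m$ (which is $2$, hence $D$ has odd degree $1$ and the sign is $+$ in the appropriate shifted convention). This is the statement that $D$ is an inner, hence in particular a derivation, with respect to $[-,-]$. From this derivation property it is immediate that: if $DF = DG = 0$ then $D[F,G] = 0$, so the bracket of two cycles is a cycle; and if $F = DF'$ is a boundary and $G$ is a cycle, then $[F,G] = [DF', G] = D[F', G] \pm [F', DG] = D[F',G]$ is a boundary. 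Therefore $[-,-]$ is well-defined on cohomology classes, and the graded antisymmetry and graded Jacobi identity are inherited from $CH^{*+1}(A,A)$. This gives $\hh^{*+1}(A,A)$ the structure of a graded Lie algebra.

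The only genuine subtlety — and the one step I would be most careful about — is bookkeeping the degree shifts and signs: one must check that the Jacobi/derivation identity for $[-,-]$ holds with the correct Koszul signs in the total grading (recall the total degree of an element of $C_k^{n,j}(A,A)$ is $j+k+n$, and $\star$ decreases total degree by one), and that with $m$ of total degree $2$ the operator $D$ behaves as an odd derivation so that the boundary-with-cycle computation produces a genuine boundary rather than picking up an obstructing sign. This is routine given that $(CH^{*+1}(A,A), \star)$ has already been established to be a graded pre-Lie algebra, so no new structural input is needed — only consistency of conventions. I would therefore present this as a short verification, citing the graded Jacobi identity and Lemma~\ref{lem:squarezero}, rather than a computation from scratch.
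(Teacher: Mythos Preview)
Your proposal is correct and follows exactly the approach indicated in the paper, which treats the proposition as immediate (note the \qed in the statement): the paragraph preceding the proposition already records that $D=[m,-]$ is an inner derivation of $[-,-]$ by the graded Jacobi identity, so the bracket descends to cohomology. Your write-up simply spells out this argument carefully, including the cycle/boundary check, which is precisely what the paper leaves implicit.
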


\subsection{Uniqueness and formality}

\begin{defn}
Let $A$ be a bidga with $m_{01}=0, \partial = m_{11}, \mu=m_{02}$. Then
\[
a = \sum\limits_{i,j} a_{ij}, \,\,\, a_{ij} \in C^{j,2-i-j}_i(A,A), \,\, i+j \ge 3
\]
is a \emph{twisting cochain} if $\partial + \mu + a$ is a derived $A_\infty$-structure.
\end{defn}

One can read off the following result immediately.
\begin{lem}
The element $a$ is a twisting cochain if and only if
\[
-D(a)=a\star a
\]
for $D=[\partial+\mu,-]$. \qed
\end{lem}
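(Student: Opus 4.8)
The plan is to unwind the definition of a twisting cochain and compare it with the square-zero condition for a $dA_\infty$-structure, exactly as in Lemma~\ref{lem:squarezero}. First I would write $m = \partial + \mu + a$ with $\partial = m_{11}$, $\mu = m_{02}$, $m_{01}=0$, and observe that $m$ has total degree $2$ in $CH^*(A,A)$ (the pieces $\partial \in C^{1,0}_1$, $\mu \in C^{2,0}_0$, and each $a_{ij} \in C^{j,2-i-j}_i$ all have total degree $2$). By Lemma~\ref{lem:squarezero}, $m$ defines a $dA_\infty$-structure if and only if $m \star m = 0$, so by definition $a$ is a twisting cochain if and only if $(\partial+\mu+a)\star(\partial+\mu+a)=0$.

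Next I would expand the pre-Lie square using bilinearity of $\star$:
\[
(\partial+\mu+a)\star(\partial+\mu+a) = (\partial+\mu)\star(\partial+\mu) + (\partial+\mu)\star a + a\star(\partial+\mu) + a\star a.
\]
Since $\partial + \mu$ is itself a $dA_\infty$-structure (indeed a bidga structure), Lemma~\ref{lem:squarezero} gives $(\partial+\mu)\star(\partial+\mu)=0$. The remaining cross terms assemble, via the definition of the Lie bracket $[F,G]=F\star G - (-1)^{(N+1)(M+1)}G\star F$ with the appropriate total degrees, into $[\partial+\mu, a]$; here I would check the sign, noting $\partial+\mu$ has total degree $2$ and $a$ has total degree $2$, so the bracket sign $(-1)^{(1+1)(1+1)}=+1$ and $[\partial+\mu,a] = (\partial+\mu)\star a + a\star(\partial+\mu)$. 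Writing $D = [\partial+\mu,-]$, the equation $m\star m = 0$ becomes $D(a) + a\star a = 0$, i.e. $-D(a) = a\star a$, which is the claimed characterisation.

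The one genuinely delicate point — the ``hard part'' — is the sign bookkeeping in identifying the two cross terms $(\partial+\mu)\star a + a\star(\partial+\mu)$ with the single bracket $D(a) = [\partial+\mu,a]$. This requires carefully tracking the total-degree conventions established just before Lemma~\ref{lem:squarezero} and confirming that the Koszul sign in the bracket definition contributes $+1$ for two total-degree-$2$ elements, rather than a subtraction. Everything else is a direct expansion using the bilinearity of the pre-Lie product and the already-established fact (Lemma~\ref{lem:squarezero}, applied to the bidga structure $\partial+\mu$) that $(\partial+\mu)\star(\partial+\mu)=0$.
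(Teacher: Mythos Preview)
Your strategy is exactly what the paper intends; its own proof is simply \qed, regarding the statement as an immediate unwinding of the definitions via Lemma~\ref{lem:squarezero}. There is, however, an inconsistency in your sign check. You compute the bracket sign as $(-1)^{(1+1)(1+1)}=+1$, but with that value the definition $[F,G]=F\star G-(-1)^{(N+1)(M+1)}G\star F$ gives $[\partial+\mu,a]=(\partial+\mu)\star a - a\star(\partial+\mu)$, a \emph{difference}, not the sum you need. The correct exponent is $NM$ with $N=M=1$ the degrees in the shifted complex $CH^{*+1}$ (equivalently, the degrees of $\Phi^{-1}(F)$ and $\Phi^{-1}(G)$ in the convolution algebra, whose Lie bracket is what is being transported); this gives sign $-1$, hence $[\partial+\mu,a]=(\partial+\mu)\star a + a\star(\partial+\mu)$ and then $-D(a)=a\star a$ as claimed. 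The displayed $(-1)^{(N+1)(M+1)}$ in the paper appears to be a misprint for $(-1)^{NM}$, so your conclusion is correct even though the literal sign computation you wrote does not support it.
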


The above is the \emph{Maurer-Cartan formula}.
\smallskip

A key step in the obstruction theory leading to uniqueness of $dA_\infty$-structures is perturbing an existing twisting cochain by an element $b$ of total degree 1. Roughly speaking, this new perturbed $dA_\infty$-structure satisfies the following- it equals the existing $dA_\infty$-structure below a certain bidegree, is modified using $b$ in this bidegree and $E_2$-equivalent to the ``old'' $dA_\infty$-structure.
 This has been shown in detail in \cite[Lemma 3.6]{RoiWhi11}, but we verify briefly that this also works with our new Lie bracket.

\begin{lem}\label{lem:perturb}
Let $A$ be a bidga with multiplication $\mu$, horizontal differential $\partial$ and trivial vertical differential. Let $a$ be a twisting cochain. Let either
\begin{description}
\item[(A)] $b \in C^{n-1,2-(n+k)}_k(A,A)$ for some $k, n$ such that $k+n \ge 3$, satisfying
$[\partial,b]=0$
\end{description}
or
\begin{description}
\item[(B)] $b \in C^{n,2-(n+k)}_{k-1}(A,A)$, for some $k, n$ with $k+n \ge 3$, satisfying
$[\mu,b]=0$.
\end{description}

Then there is a twisting cochain $\overline{a}$ satisfying
\begin{itemize}
\item the $dA_\infty$-structures $\partial + \mu + a$ and $\overline{m} = \partial + \mu + \overline{a}$ are
$E_2$-equivalent,
\item $\overline{a}_{uv} = a_{uv}$ for $u<k$ or $v<n-1$ or $(u,v)=(k,n-1)$ in case {\bf (A)} and for $u<k-1$ or $v<n$
or $(u,v)=(k-1,n)$ in case {\bf (B)},
\item $\overline{a}_{kn} = a_{kn} - [\mu,b]$ in
case {\bf (A)},
\item $\overline{a}_{kn}= a_{kn} - [\partial,b] $ in case {\bf (B)}.
\end{itemize}

\end{lem}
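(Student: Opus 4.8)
The plan is to define $\overline{a}$ by an explicit perturbation and then verify the Maurer--Cartan equation holds term by term, tracking carefully which bidegree components are affected. In both cases the idea is to conjugate the twisting cochain by an ``exponential'' of $b$. Since $b$ has total degree $1$, one expects to set $\overline{a} = e^{[b,-]}(\partial + \mu + a) - (\partial + \mu)$, or equivalently to define a family of corrections inductively so that $\overline{a}$ solves $-D(\overline{a}) = \overline{a}\star\overline{a}$ for $D = [\partial+\mu,-]$. First I would record that in case \textbf{(A)} the hypothesis $[\partial,b]=0$ together with the bidegree of $b$ means $[b,\partial+\mu+a]$ has leading term $-[\mu,b]$ in bidegree $(k,n-1)$; in case \textbf{(B)} the hypothesis $[\mu,b]=0$ makes $-[\partial,b]$ the leading term in bidegree $(k-1,n)$. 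This gives the claimed formula for $\overline{a}_{kn}$ and shows the lower components are unchanged, because $[b,-]$ strictly increases the appropriate grading.

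Next I would check that $\overline{a}$ is again a twisting cochain. The key computation is that if $m = \partial + \mu + a$ satisfies $m\star m = 0$ (equivalently $[m,m]=0$, using Lemma~\ref{lem:squarezero}), then the conjugate $e^{[b,-]}(m)$ also satisfies the same equation. This follows from the graded Jacobi identity: conjugation by an exponential of an inner derivation is a Lie algebra automorphism, so it preserves the locus of square-zero elements. One should verify convergence of the series, which holds here because $[b,-]$ raises total degree and $CH^*(A,A)$ is suitably graded with the product decomposition, so on each fixed bidegree only finitely many terms contribute. Then $\overline{m} := e^{[b,-]}(m)$ still has $\partial + \mu$ as its $(0,\cdot)$ and $(\cdot,0)$ part since $[b,\partial]$ and $[b,\mu]$ raise grading, so $\overline{a} := \overline{m} - (\partial+\mu)$ consists of components with $i+j\geq 3$ as required.

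For the first bullet point, the $E_2$-equivalence, I would produce the strict $\infty$-morphism (in the sense of the previous section, i.e.\ a morphism of $d\As^{\ac}$-coalgebras) implementing the conjugation: the automorphism $e^{[b,-]}$ of the convolution Lie algebra corresponds, under the dictionary of Section~\ref{sec:infmorphims} relating twisting morphisms to $\infty$-morphisms, to an $\infty$-morphism $F$ from $(A,\overline{m})$ to $(A,m)$ whose linear term $f_{01}$ is the identity of $A$. An $\infty$-morphism whose first component $f_{01}$ is the identity is automatically an $E_2$-equivalence (indeed an $E_1$-equivalence), since $H^v_*(f_{01})$ and hence $H^h_*(H^v_*(f_{01}))$ are isomorphisms. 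Alternatively one can cite the relevant part of \cite[Lemma 3.6]{RoiWhi11}, whose argument is insensitive to the sign discrepancy in the Lie bracket noted above, since the two brackets differ only by a sign depending on the fixed grading and $b$ is homogeneous.

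The main obstacle I anticipate is bookkeeping: verifying that $[b,-]$ shifts bidegrees in exactly the right way so that the named component $\overline{a}_{kn}$ picks up precisely $-[\mu,b]$ (resp.\ $-[\partial,b]$) and nothing else, and that all components $\overline{a}_{uv}$ with $(u,v)$ ``below'' $(k,n-1)$ (resp.\ $(k-1,n)$) in the stated partial order are genuinely unchanged. This requires being precise about the bidegrees of $\partial = m_{11}$ (bidegree $(1,0)$), $\mu = m_{02}$ (bidegree $(0,0)$) and $b$, and checking that composing with $b$ in the pre-Lie product moves things strictly in the claimed direction. Once the grading analysis is set up correctly, the Maurer--Cartan verification and the $E_2$-equivalence statement follow formally from the Lie-algebraic properties established earlier.
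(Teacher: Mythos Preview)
Your approach is sound but takes a genuinely different route from the paper. The paper's proof is two lines: it checks that for the specific inputs $\partial$ and $\mu$ the new bracket $[-,-]$ agrees with the bracket $[-,-]_{RW}$ of~\cite{RoiWhi11} (the sign discrepancy $(-1)^{k(m+j+l+1)}$ vanishes when one argument is $\partial$ or $\mu$), and then defers entirely to~\cite[Lemma~3.6]{RoiWhi11}. You instead reconstruct that lemma from scratch via the gauge action $\overline m = e^{\mathrm{ad}_b}(m)$ on Maurer--Cartan elements, which is exactly the mechanism underlying the cited result. Your argument is more self-contained and makes the Lie-theoretic content explicit; the paper's is quicker but opaque without the reference in hand.

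Two small corrections to your write-up. First, $[b,-]$ does \emph{not} raise total degree (since $b$ has shifted degree~$0$); your convergence argument must rest entirely on the bidegree observation you give next, namely that each application of $[b,-]$ increases $i+j$ by $k+n-2\geq 1$, so on any fixed component only finitely many terms of the exponential survive. Second, the ``leading term'' of the perturbation lands in bidegree $(k,n)$, not $(k,n-1)$ or $(k-1,n)$ as you wrote: in case~\textbf{(A)}, $b\in C^{n-1,*}_k$ and $\mu\in C^{2,0}_0$ give $[\mu,b]\in C^{n,*}_k$, matching $\overline a_{kn}$, while the components with $v\leq n-1$ or $u<k$ are untouched. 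With these fixes the bookkeeping goes through as you outline.
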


\begin{proof}
A quick check of the signs in both Lie brackets shows that
\[
[\partial, b]_{RW} = [\partial, b] \,\,\,\mbox{and}\,\,\, [\mu,b]_{RW}=[\mu,b].
\]
Hence this is identical to \cite[Lemma 3.6]{RoiWhi11}, where the $\overline{a}_{uv}$ are constructed inductively.
\end{proof}

We can now proceed to our uniqueness theorem, which has been shown in the context of $[-,-]_{RW}$ and $\hh^{*,*}_{RW}$ in \cite[Theorem 3.7]{RoiWhi11}.

\begin{thm}
Let $A$ be a bidga with multiplication $\mu$, horizontal differential $\partial$ and trivial vertical differential. If
\[
\hh^{r,2-r}_{bidga}(A,A)=0 \,\,\,\mbox{for}\,\,\, r\ge 3,
\]
then every $dA_\infty$-structure on $A$ with $m_{01} =0$, $m_{11}= \partial$ and $m_{02}=\mu$ is $E_2$-equivalent to the trivial one.
\end{thm}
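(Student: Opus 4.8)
The plan is to run a standard obstruction-theory argument, perturbing an arbitrary $dA_\infty$-structure of the given form toward the trivial one one "total degree" at a time, using Lemma~\ref{lem:perturb} as the inductive engine and the vanishing of $\hh^{r,2-r}_{bidga}(A,A)$ to guarantee that the required perturbing cochains exist. Let $m = \partial + \mu + a$ be a $dA_\infty$-structure on $A$ with $m_{01}=0$, $m_{11}=\partial$, $m_{02}=\mu$, so $a=\sum_{i+j\geq 3} a_{ij}$ with $a_{ij}\in C_i^{j,2-i-j}(A,A)$ is a twisting cochain; by the Maurer--Cartan formula this means $-D(a) = a\star a$ for $D=[\partial+\mu,-]$. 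I want to show $a$ can be killed, up to $E_2$-equivalence, by a sequence of perturbations.

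First I would set up the induction. Order the bidegrees $(i,j)$ with $i+j\geq 3$ so that I can treat $a$ as built up in stages; the natural filtration is by the total degree $i+j$ (and within a fixed total degree, one may order by, say, $i$). Assume inductively that $a_{i'j'}=0$ for all $(i',j')$ with $i'+j'$ below some value $N$ (and suitably for the partial ordering within degree $N$), so that the lowest nonzero term $a_{kn}$ has $k+n=N\geq 3$. Expanding $-D(a)=a\star a$ and collecting the terms landing in the summand $C^{n,2-k-n}_k(A,A)$ — equivalently, the lowest-degree component of the Maurer--Cartan equation — the quadratic term $a\star a$ contributes nothing in this lowest degree (it is a sum of compositions of strictly-lower terms, all of which vanish by the inductive hypothesis), so one gets $D(a_{kn}) = [\partial+\mu, a_{kn}] = 0$. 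Thus $a_{kn}$ is a cocycle in $(CH^*(A,A), D)$ of bidegree contributing to $\hh^{n, 2-k-n}$; splitting $D = [\partial,-]+[\mu,-]$ and using that $[\partial,-]$ preserves the vertical grading while $[\mu,-]$ preserves the horizontal grading (as noted in the remark following Definition~\ref{def:hochschild}), the cocycle condition decomposes, and $a_{kn}$ represents a class in $\hh^{n,2-n}_{bidga}(A,A)$ in the appropriate indexing — precisely the group assumed to vanish for $n$ with $k+n\geq 3$, i.e. for $n\geq 3-k$; one has to check the indexing matches $\hh^{r,2-r}_{bidga}$ with $r\geq 3$, which is where the hypothesis "$m_{01}=0$" and the shape of the bidegrees enters.

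Since the relevant Hochschild cohomology group vanishes, $a_{kn}$ is a coboundary: there is $b$ with $a_{kn} = [\mu,b]$ (or $a_{kn}=[\partial,b]$, according to which of the two sub-complexes the obstruction lies in), with $b$ of the bidegree specified in case \textbf{(A)} (resp. \textbf{(B)}) of Lemma~\ref{lem:perturb} and with $[\partial,b]=0$ (resp. $[\mu,b]=0$) — the latter automatically from the cocycle decomposition. Applying Lemma~\ref{lem:perturb} produces an $E_2$-equivalent $dA_\infty$-structure $\overline{m}=\partial+\mu+\overline{a}$ with $\overline{a}_{kn}=a_{kn}-[\mu,b]=0$ and with no lower-degree terms disturbed. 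This strictly advances the induction. Iterating over all $(i,j)$ in increasing total degree kills $a$ term by term; composing the resulting (countably many) $E_2$-equivalences — and checking this composite is well defined, since at each total degree only finitely many were modified and the perturbations stabilise on any fixed bidegree — gives an $E_2$-equivalence from $m$ to the trivial structure $\partial+\mu$.

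The main obstacle I anticipate is purely bookkeeping rather than conceptual: correctly matching the trigrading on $C_k^{n,i}(A,A)$ and the "total degree $i+j+n$" convention to Sagave's bidegree conventions for $m_{ij}$, so that the obstruction class $a_{kn}$ genuinely lands in $\hh^{r,2-r}_{bidga}(A,A)$ with $r\geq 3$ and not in some neighbouring group — and dealing cleanly with the dichotomy between cases \textbf{(A)} and \textbf{(B)} of Lemma~\ref{lem:perturb}, i.e. recognising which part of $D = [\partial,-]+[\mu,-]$ the obstruction is a boundary for. A secondary technical point is justifying that the infinite composite of $E_2$-equivalences makes sense and is itself an $E_2$-equivalence; this follows from the explicit statement of Lemma~\ref{lem:perturb} that $\overline{a}_{uv}=a_{uv}$ below the current stage, so the perturbations are "upper triangular" with respect to the filtration and converge stagewise.
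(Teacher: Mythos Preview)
Your approach is essentially the same as the paper's: write $m=\partial+\mu+a$, order the $a_{ij}$ by total degree $i+j$ and then by $i$, use Maurer--Cartan plus the inductive vanishing to get $D(a_{kn})=0$, invoke the Hochschild vanishing to produce $b$, and apply Lemma~\ref{lem:perturb}. The paper is equally sketchy on the degree bookkeeping, simply citing \cite[Theorem~3.7]{RoiWhi11} for the claim that $D(a_{kn})=0$.

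One genuine slip: you write that $a_{kn}$ represents a class in $\hh^{n,2-n}_{bidga}(A,A)$, but in fact it lies in $\hh^{k+n,\,2-k-n}_{bidga}(A,A)$, which is exactly the group assumed to vanish (with $r=k+n\geq 3$). You flagged this indexing as an anticipated obstacle, so just fix it. Your worry about the (A)/(B) dichotomy is also legitimate: since $D(b)=a_{kn}$ gives $a_{kn}=[\partial,b_{k-1,n}]+[\mu,b_{k,n-1}]$ with two pieces, one has to apply Lemma~\ref{lem:perturb} in each of its two modes; the paper does not spell this out either and defers to \cite{RoiWhi11}.
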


\begin{proof}

Let $m$ be a $dA_\infty$-structure on $A$ as given in the statement. We want to show that it is equivalent to the $dA_\infty$-structure $\partial + \mu$. We can write $m=\partial + \mu + a$ with $a$ a twisting cochain.

\medskip
We look at $a_{kn}$, $k+n=t\ge 3$. We show that $m$ is equivalent to a $dA_\infty$-structure $\bar{m}= \partial + \mu + \bar{a}$ with $\bar{a}_{kn}=0$ for fixed $t$ by induction on $k$.

\medskip
To start this induction we assume that
\[
a_{ij}=0\,\,\,\mbox{for}\,\,\, i+j<t \,\,\,\mbox{and for}\,\,\, i+j=t, \mbox{if}\,\,\, i<k.
\]
The new equivalent $dA_\infty$-structure $\overline{m}$ will also satisfy
\[
\bar{a}_{ij}=a_{ij}=0 \,\,\,\mbox{for}\,\,\, i+j<t \,\,\,\mbox{and for}\,\,\, i+j=t, \mbox{if}\,\,\, i<k
\]
as well as further
\[
\bar{a}_{kn}=0.
\]
So to construct $\overline{m}$, we ``kill'' $a_{kn}$ but leave the trivial lower degree $a_{ij}$ invariant.

\medskip
Since $a$ is a twisting cochain, it satisfies the Maurer-Cartan formula
\[
-D(a)= a\star a.
\]
However, an argument similar to \cite[Theorem 3.7]{RoiWhi11} shows that this implies $D(a_{kn})=0$ for degree reasons. Hence $a_{kn}$ is a cycle and gives us a class
\[
[a_{kn}] \in \hh^{k+n,2-k-n}_{bidga}(A,A)
\]
in the Hochschild cohomology of $A$. This cohomology group has been assumed to be zero, hence $a_{kn}$ must be a boundary too. Thus, there is a $b$ of total degree 1 with $D(b)=a_{kn}$. For degree reasons, this $b$ has to be of the form
\[
b = b_0 + b_1, \,\,\, b_0 \in C^{n,2-n-k}_{k-1}(A,A), \,\,\, b_1 \in C^{n-1,2-n-k}_k(A,A)
\]
with
\[
[\mu, b_0] = 0 \,\,\,\,\mbox{and}\,\,\,\, [\partial, b_1]=0,
\]
meaning that
\[
D(b) = D(b_0 + b_1)= [\mu, b_1] + [\partial, b_0].
\]
Then, just as in the proof of~\cite[Theorem 3.7]{RoiWhi11},
applying Lemma~\ref{lem:perturb} to $b_1$ yields a $dA_\infty$-structure $\overline{m}=\partial+\mu+\overline{a}$ with
\[
\bar{a}_{kn}=a_{kn} - [\mu, b_1] - [\partial, b_0] = a_{kn} -D(b)=0.
\]
\end{proof}

It was shown in \cite[Section 4]{RoiWhi11} that $\hh^{*,*}_{RW}(A,A)$ is invariant under $E_2$-equivalences. Since this argument is independent of choice of signs in the Lie bracket, it also holds for our $\hh^{*,*}_{bidga}(A,A)$. Hence we can now give a criterion for intrinsic formality of a dga. (Recall that a dga $A$ is intrinsically formal if for any other dga $B$ with $\h^*(A) \cong \h^*(B)$ as associative algebras, $A$ and $B$ are quasi-isomorphic.)

\begin{cor}
Let $A$ be a dga and $E$ its minimal
model with $dA_\infty$-structure $m$. By $\tilde{E}$, we denote the
underlying bidga of $E$, i.e. $\tilde{E}=E$ as $k$-modules together
with $dA_\infty$-structure $\tilde{m}=m_{11}+m_{02}$. If
\[
\hh^{m,2-m}_{bidga}(\tilde{E},\tilde{E})=0
\quad\quad\mbox{for}\ m \ge 3,
\]
then $A$ is intrinsically formal.\qed
\end{cor}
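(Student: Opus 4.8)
The plan is to derive this corollary by combining the uniqueness theorem proved just above with the $E_2$-invariance of the bidga Hochschild cohomology established in~\cite{RoiWhi11}. The setup is that $A$ is a dga and $E$ is its minimal model, which is a degreewise $\kfield$-projective $dA_\infty$-algebra together with an $E_2$-equivalence $E\to A$ (this is Sagave's theorem recalled in Section~\ref{sec:review}). Write $m$ for the $dA_\infty$-structure on $E$ and $\tilde m = m_{11}+m_{02}$ for the underlying bidga structure $\tilde E$. Note that $m_{01}=0$ since $E$ is minimal, so $\tilde m$ is exactly of the form $\partial+\mu$ with $\partial = m_{11}$, $\mu = m_{02}$ and trivial vertical differential, matching the hypotheses of the uniqueness theorem.

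First I would apply the uniqueness theorem to $\tilde E$: the hypothesis $\hh^{r,2-r}_{bidga}(\tilde E,\tilde E)=0$ for $r\geq 3$ guarantees that \emph{every} $dA_\infty$-structure on $E$ with $m_{01}=0$, $m_{11}=\partial$, $m_{02}=\mu$ is $E_2$-equivalent to the trivial one $\partial+\mu$. In particular the actual structure $m$ coming from the minimal model is $E_2$-equivalent to $\tilde m$, so that $E$ with its minimal $dA_\infty$-structure is $E_2$-equivalent to the bidga $\tilde E$. Since $\tilde E$ together with $m_{11}$ and $m_{02}$ is a termwise $\kfield$-projective resolution of the graded algebra $H^*(A)$, this says the minimal model of $A$ is formal as a derived $A_\infty$-algebra.

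Next I would pass from $A$ to an arbitrary dga $B$ with $H^*(A)\cong H^*(B)$ as graded algebras, and let $E'$ be the minimal model of $B$. The key point is that the bidga Hochschild cohomology $\hh^{*,*}_{bidga}$ depends (up to isomorphism) only on the graded algebra $H^*(A)\cong H^*(B)$, via the resolution data in Sagave's theorem, together with the $E_2$-invariance statement; hence $\hh^{r,2-r}_{bidga}(\tilde E',\tilde E')=0$ for $r\geq 3$ as well. Applying the uniqueness theorem again shows $E'$ is $E_2$-equivalent to its underlying bidga $\tilde E'$. Since $\tilde E$ and $\tilde E'$ are both termwise $\kfield$-projective resolutions of the same graded algebra $H^*(A)$, they are linked by an $E_2$-equivalence (one can compare them by a standard lifting argument for projective resolutions). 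Chaining the $E_2$-equivalences $A\leftarrow E \sim \tilde E \sim \tilde E' \sim E' \to B$ and using that an $E_2$-equivalence between dgas (viewed as derived $A_\infty$-algebras concentrated in horizontal degree via the resolution) induces a quasi-isomorphism on the associated dgas, we conclude that $A$ and $B$ are quasi-isomorphic, i.e.\ $A$ is intrinsically formal.

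The main obstacle is the bookkeeping in the last step: making precise the sense in which $\hh^{*,*}_{bidga}$ of the minimal model depends only on $H^*(A)$, and then assembling the zig-zag of $E_2$-equivalences into an honest quasi-isomorphism of dgas. This is essentially the same argument as in~\cite[Section~4]{RoiWhi11} for the $\hh^{*,*}_{RW}$ version, and, as already noted in the excerpt, the argument is independent of the choice of signs in the Lie bracket, so it transfers verbatim to the present $\hh^{*,*}_{bidga}$; for this reason the proof can reasonably be left to the reader, citing~\cite[Theorem~4.4]{RoiWhi11} and the preceding uniqueness theorem.
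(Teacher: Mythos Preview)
Your proposal is correct and matches the paper's approach: the corollary is stated with a \qed\ and no explicit proof, the paragraph preceding it explaining that the $E_2$-invariance argument of~\cite[Section~4]{RoiWhi11} is sign-independent and hence applies to $\hh^{*,*}_{bidga}$, so that the result follows from the uniqueness theorem exactly as in~\cite[Theorem~4.4]{RoiWhi11}. Your expansion of the argument (uniqueness for $E$, transferring the vanishing to $\tilde E'$ via $E_2$-invariance and the comparison of projective resolutions of $H^*(A)$, then chaining the $E_2$-equivalences) is precisely the content of that reference.
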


\section{Directions for further work}\label{sec:further}

In this paper we have given an operadic perspective on derived $A_\infty$-structures, allowing us to view
derived $A_\infty$-algebras as algebras over an operad. By results of various authors~\cite{Fresse09, Har10,Mur11}, it follows from
our description that there is a
model category structure on derived $A_\infty$-algebras such that the weak equivalences are the $E_1$-equivalences
(see Definition~\ref{def:equivs}). However, we do not expect this model structure to be homotopically meaningful.
Indeed, in order to view Sagave's minimal models as some kind of cofibrant replacement, one would need a model
structure in which the weak equivalences are the $E_2$-equivalences. Producing such a model structure will involve
a change of underlying category, probably to the category of twisted chain complexes. One would then need a suitable
model structure on this underlying category and also to develop the appropriate notion of cobar construction.
The apparent complication in carrying out such a programme explains
our choice to work with vertical bicomplexes in this paper.
We expect to return to this in future work.

\end{document}